\documentclass[a4paper,11pt,pdftex]{article}
\usepackage{a4wide}
\usepackage{amsmath,amssymb,amsfonts,amsthm}
\usepackage{graphicx}
\usepackage{pb-diagram}
\usepackage{color}
\usepackage{calrsfs}
\usepackage[margin=10pt,font=small]{caption}

\newcommand{\bd}{\partial X}
\newcommand{\bdf}{\partial F_{g}}
\newcommand{\ct}{E_{X}}

\newcommand{\Tr}{\text{Tr}}
\newcommand{\ZX}{\zeta[X]}
\newcommand{\ZXo}{\zeta[X_{1}]}
\newcommand{\ZXt}{\zeta[X_{2}]}

\newcommand{\nn}{\nonumber}

\title{Graphs, spectral triples and Dirac zeta functions}
\author{Jan Willem de Jong}

\begin{document}
\include{def}
\maketitle

\abstract
To a finite, connected, unoriented graph of Betti-number $g \geq 2$ and valencies $\geq 3$ we associate a finitely summable, commutative spectral triple (in the sense of Connes), whose induced zeta functions encode the graph. This gives another example where non-commutative geometry provides a rigid framework for classification.

\section{Introduction}
Let $X$ be a finite, connected, unoriented graph with first Betti number $g \geq 2$ (from now on called the genus) and valencies $\geq 3$. Let $\ct$ denote the universal covering tree of $X$. The fundamental group $\Gamma$ of $X$, which is a free group of rank $g$, acts on $\ct$ and we have
\bea
X \cong \ct / \Gamma.
\eea
The action of the fundamental group induces an action on the boundary $\bd$ of $\ct$. The dynamics of the action on the boundary endowed with the Patterson-Sullivan measure encodes the graph, that is, equivariant, non-singular homeomorphisms only exist between boundaries coming from isomorphic graphs (see Proposition \ref{rigid}). In this article we will show that, using this, the graph can be encoded by a finitely summable, commutative spectral triple (see Definition \ref{s3}). This is a notion from non-commutative geometry describing a non-commutative analogue of Riemannian manifolds (see \cite{con} for an extensive treatment of non-commutative geometry). In fact, we will show that the zeta function formalism available for finitely summable spectral triples encodes the Patterson-Sullivan measure and hence the isomorphism class of the graph.\\

This article is inspired by \cite{cor} where a similar construction was applied to Riemann surfaces, which was based on \cite{cor2} where, among other things, $\theta$-summable spectral triples are constructed for actions on trees, which was in fact a refinement of \cite{ara}. The chain of ideas started in \cite{chri} with a construction of spectral triples for AF $C^{*}$-algebras and closely related work in \cite{s3r}.\\

{\it Construction of the spectral triple $\mathcal{S}_{X} = (A,H,D)$.} Given a graph of genus $g$ as above, then after representing $\Gamma$ as group of isometries of $E_{X}$, there exists a $\Gamma$-equivariant homeomorphism $\Phi_{X}: \p F_{g} \to \bd$ (see Proposition \ref{giso}), where $\partial F_{g}$ is the boundary of the Cayley graph of $\Gamma$.
For the algebra we take $A=C(\p F_{g},\C)$, the continuous, complex valued functions on the boundary of the Cayley graph. Note that by the Gelfand-Naimark theorem this algebra encodes the topology on $\p F_{g}$. As a side remark, the natural candidate for $A$ in non-commutative geometry would have been 
$A \rtimes \Gamma$, but by Connes' result on non-amenable groups, this would not give rise to a \emph{finitely} summable spectral triple (see \cite{con}, Theorems 17 and 19, pp. 214-215).
We will also need $A_{\infty}=C(\p F_{g},\Z) \otimes_{\Z} \C$, the locally constant functions on the boundary. The Hilbert-space $H$ will be the completion of $A$ with respect to integration, i.e. $||f ||^{2} = \int |f|^{2} d\nu$, of the induced measure from the Patterson-Sullivan measure on $\bd$ to $\p F_{g}$ via $\Phi_{X}$. The Dirac operator $D$ is composed of projection operators depending on the word grading in $\Gamma$ (see Definition \ref{dirac}).\\
The result is the following (see Theorem \ref{ctrip}):

\begin{Theo}
The spectral triple $\mathcal{S}_{X}$ determines the graph $X$.
\end{Theo}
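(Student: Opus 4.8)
\textit{Plan.} The strategy is to read off from the triple $\mathcal{S}_{X}=(A,H,D)$, in this order, (i) the boundary $\bdf$ together with its word-filtration and hence its $F_{g}$-structure, and (ii) the pulled-back Patterson--Sullivan measure $\nu$ on $\bdf$, and then to feed this into Proposition \ref{giso} and Proposition \ref{rigid}. The reason (i)--(ii) suffices is that $\bdf$ with its word structure and $\nu$ is, via $\Phi_{X}$ and Proposition \ref{giso}, the same datum as $(\bd,\ \text{the }\Gamma\text{-action},\ \text{the Patterson--Sullivan measure})$ up to equivariant, non-singular homeomorphism, which is exactly the input of Proposition \ref{rigid}.

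\textit{The combinatorial skeleton.} By Definition \ref{dirac}, $D$ is a sum of pairwise orthogonal projections weighted according to word length, so its distinct eigenvalues form a strictly increasing sequence; enumerating them recovers the increasing chain $P_{0}\le P_{1}\le\cdots$ of orthogonal projections in which $P_{n}$ projects onto the image in $H$ of the finite-dimensional subalgebra $\A_{n}\subset A_{\infty}$ of functions that are constant on cylinders of word length $n$ (this image is finite-dimensional, hence closed). Since the $\Gamma$-action on $\bd$ is minimal, $\nu$ has full support, so the natural map $\iota\colon A\hookrightarrow H$ is injective and $\A_{n}=\iota^{-1}(\operatorname{ran}P_{n})$; thus the whole tower $\C\cdot 1=\A_{0}\subset\A_{1}\subset\cdots$, with its algebra inclusions, is recovered from $(A,H,D)$. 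This tower is precisely the inverse system of cylinder partitions of $\bdf$, i.e.\ the rooted Cayley tree of $F_{g}$; in particular $g=\tfrac12\dim\A_{1}$, and by Proposition \ref{giso} we have recovered $\bdf$ as a topological $F_{g}$-space with its word structure.

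\textit{The measure, and the isomorphism argument.} The line $\operatorname{ran}P_{0}$ is spanned by $\iota(1)$ — the constants are the lowest filtration level — so the (unit) vector $\xi_{0}:=\iota(1)$ is canonical up to a phase and $\iota(f)=\pi(f)\xi_{0}$ for all $f$. Hence $\nu(f)=\li\pi(f)\xi_{0},\xi_{0}\ri$ for every $f\in A$; concretely the cylinder masses $\nu(U_{w})=\|\iota(\chi_{U_{w}})\|_{H}^{2}$ are determined (this is the content of the statement that the Dirac zeta functions $s\mapsto\Tr(\pi(f)|D|^{-s})$ encode the Patterson--Sullivan measure). So $\mathcal{S}_{X}$ determines $\bdf$, its word-filtration and $\nu$. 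Now suppose $\mathcal{S}_{X_{1}}\cong\mathcal{S}_{X_{2}}$, implemented by a unitary $U$ with $U\pi_{1}(\cdot)U^{*}=\pi_{2}(\phi(\cdot))$ for a $*$-isomorphism $\phi$ and $UD_{1}U^{*}=D_{2}$. Then $\phi$ — equivalently the homeomorphism $h\colon\bdf\to\bdf$ dual to it — carries $P_{n}^{(1)}$ to $P_{n}^{(2)}$, hence $\A_{n}$ to $\A_{n}$, so $h$ preserves the word-filtration; and $U$ carries $\xi_{0}^{(1)}$ to $\xi_{0}^{(2)}$ up to a phase, so $h_{*}\nu_{X_{2}}=\nu_{X_{1}}$. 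Transporting along $\Phi_{X_{1}},\Phi_{X_{2}}$, the homeomorphism $\Phi_{X_{2}}\circ h\circ\Phi_{X_{1}}^{-1}\colon\bd_{1}\to\bd_{2}$ is non-singular for the Patterson--Sullivan measures, and, once one checks it is $\Gamma$-equivariant, Proposition \ref{rigid} forces $X_{1}\cong X_{2}$, which is the assertion.

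\textit{The main difficulty} is precisely the clause ``once one checks it is $\Gamma$-equivariant''. A priori $U$ forces only that $h$ be an automorphism of the rooted $2g$-regular tree together with $h_{*}\nu_{X_{2}}=\nu_{X_{1}}$; the automorphism group of the rooted tree is vastly larger than $F_{g}$, so at that level $h$ need not commute with the $F_{g}$-action. Closing this gap is where the fine structure of the Patterson--Sullivan measure has to be used: one must show that any filtration-preserving homeomorphism matching the cylinder masses of two Patterson--Sullivan measures $\nu_{X_{1}},\nu_{X_{2}}$ is automatically equivariant — exploiting the cone-type/conformal structure of such measures on a tree (only finitely many local splitting patterns occur, and they propagate rigidly) — or, equivalently, that the equivariance hypothesis in Proposition \ref{rigid} can be weakened to compatibility with the word-filtration. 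Everything else is routine: injectivity of $\iota$ from minimality of the boundary action; the identification of the $P_{n}$ directly from Definition \ref{dirac}; and convergence of the zeta functions in a right half-plane (finite summability), which follows by comparing the exponential growth of $\dim\A_{n}$ with the growth of the eigenvalues of $D$.
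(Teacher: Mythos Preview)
Your argument and the paper's diverge sharply in length and in strategy. The paper's proof is two lines: the triple $(A,H,D)$ determines the family $(\zeta_a^X)_{a\in A_\infty}$ by the formula $\zeta_a^X(s)=\Tr_H(a|D|^s)$, and this family determines $X$ by the Main Theorem~\ref{czet}. All the real work --- recovering the measure from the zeta functions (Proposition~\ref{meas}) and invoking the rigidity Proposition~\ref{rigid} with the \emph{canonically equivariant} map $\tilde\phi=\Phi_{X_2}\circ\Phi_{X_1}^{-1}$ --- was already packaged into Theorem~\ref{czet}.

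Your first two paragraphs, extracting the filtration $(A_n)$ and the measure $\nu$ from $(A,H,D)$, are correct and in fact give a direct proof of what Proposition~\ref{meas} obtains via the zeta limit $s\to-\infty$. Had you stopped there and said ``hence the row $(\zeta_a^X)_a$ is determined, now apply Theorem~\ref{czet}'', you would have a complete proof along the paper's lines.

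The difficulty you flag is self-inflicted. You reformulate ``$\mathcal{S}_X$ determines $X$'' as ``$\mathcal{S}_{X_1}\cong\mathcal{S}_{X_2}\Rightarrow X_1\cong X_2$'' for an \emph{abstract} isomorphism of spectral triples, i.e.\ a unitary $U$ together with an arbitrary $*$-isomorphism $\phi$ of $A$. That is a strictly stronger statement than what the paper asserts or proves: in the paper's setup the algebra $A=C(\partial F_g)$ is fixed once $g$ is known, the comparison of zeta rows in Theorem~\ref{czet} is literally over the same index set $A_\infty$, and the boundary map fed into Proposition~\ref{rigid} is the identity on $\partial F_g$ transported via $\Phi_{X_1},\Phi_{X_2}$ --- equivariant by construction. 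Your homeomorphism $h$ of $\partial F_g$ coming from $\phi$ need not be $\Gamma$-equivariant (as you correctly observe: the automorphism group of the rooted $2g$-regular tree is enormous), and nothing in the paper closes that gap either; the final Remark explicitly leaves the question of the right notion of morphism of spectral triples open. So either drop the abstract $\phi$ and compare measures on the common $\partial F_g$, or acknowledge that you are aiming at a genuinely stronger rigidity statement that the paper does not establish.
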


In fact, this theorem follows from {\it zeta function rigidity}, which means that the (spectral) zeta functions, defined by $\zeta_{a}^{X}(s)=\Tr(a|D|^{s})$ for $a \in A_{\infty}$, already contain all the information of the graph and hence of the triple (see Theorem \ref{czet}). The construction depends on the choice of an origin and on a minimal set of chosen generators for $\Gamma$. To deal with these choices, we collect them all in a set, called $\zeta[X]$ (see Definition \ref{equivzeta}).

\begin{Theo}[Main theorem]
Let $X_{1},X_{2}$ be finite, connected graphs of genus $g \geq 2$ and valencies $\geq 3$. Then either
\bea
\ZXo \cap \ZXt = \emptyset,
\eea
or
\bea
\ZXo = \ZXt \text{ and } X_{1} \cong X_{2} \text{ as graphs}.
\eea
Here the intersection is defined as follows:\\
One starts by comparing at the unit of the algebras, i.e. $\zeta_{1}^{X_{i}}$: If these are different, the intersection is defined to be $\emptyset$; If they are the same, the genus $g$ is the same by Proposition \ref{zetaone}. Now consider $\partial F_{g}$ and fix the algebra $A_{\infty}$. The infinitely long rows $(\zeta_{a}^{X_{i}})_{a \in A_{\infty}} \in \zeta[X_{i}]$ are now indexed by the same algebra $A_{\infty}$ and we can compare elements $(\zeta_{a}^{X_{i}})_{a \in A_{\infty}} \in \zeta[X_{i}]$.
\end{Theo}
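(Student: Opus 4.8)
The plan is to unpack the definition of the intersection given in the statement and reduce everything to the rigidity of Proposition~\ref{rigid}. If $\zeta_{1}^{X_{1}}\neq\zeta_{1}^{X_{2}}$ there is nothing to prove, since then $\ZXo\cap\ZXt=\emptyset$ by definition. So assume $\zeta_{1}^{X_{1}}=\zeta_{1}^{X_{2}}$. By Proposition~\ref{zetaone} this forces $g_{1}=g_{2}=g$, so both spectral triples are built over the \emph{same} free group $F_{g}$, hence over the same boundary $\bdf$ with the same algebra $A_{\infty}=C(\bdf,\Z)\otimes_{\Z}\C$. In particular the rows $(\zeta_{a}^{X_{i}})_{a\in A_{\infty}}$ are genuinely indexed by one and the same set, and it is meaningful to ask whether some row for $X_{1}$ equals some row for $X_{2}$ --- which is exactly the comparison described in the statement.

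Suppose such a coincidence occurs: there is a choice of origin and minimal generating set for each of $X_{1}$ and $X_{2}$ (one element of $\ZXo$ and one of $\ZXt$) whose rows agree, $(\zeta_{a}^{X_{1}})_{a}=(\zeta_{a}^{X_{2}})_{a}$. Write $\mu_{i}$ for the Patterson--Sullivan measure on $\partial X_{i}$ and $\nu_{i}=\Phi_{X_{i}}^{*}\mu_{i}$ for its pull-back to $\bdf$, the measure used to build $H$. The crucial step is to recover $\nu_{i}$ from this single row. By Definition~\ref{dirac} the operator $|D_{i}|^{s}$ is a scalar on each word-length piece of $A$, so $\zeta_{a}^{X_{i}}(s)=\Tr(a|D_{i}|^{s})$ is an explicit Dirichlet-type series; finite summability (Definition~\ref{s3}) guarantees convergence for $\Re(s)$ large, and the coefficients of the series are built from the $\nu_{i}$-masses of the cylinder sets of $\bdf$ weighted by $a$. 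This is precisely the bookkeeping behind the zeta function rigidity Theorem~\ref{czet}: specialising $a$ to the characteristic functions $\chi_{C}\in A_{\infty}$ of cylinders and reading off coefficients one recovers $\nu_{i}(C)$ for every cylinder $C$, hence $\nu_{i}$. Therefore $\nu_{1}=\nu_{2}$.

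It remains to turn this into a graph isomorphism. The composition $\psi:=\Phi_{X_{2}}\circ\Phi_{X_{1}}^{-1}\colon\partial X_{1}\to\partial X_{2}$ is a homeomorphism; it is $\Gamma$-equivariant because each $\Phi_{X_{i}}$ is (Proposition~\ref{giso}); and it is non-singular because $\psi_{*}\mu_{1}=(\Phi_{X_{2}})_{*}\nu_{1}=(\Phi_{X_{2}})_{*}\nu_{2}=\mu_{2}$, so $\psi$ in fact transports one Patterson--Sullivan measure to the other. Proposition~\ref{rigid} then gives $X_{1}\cong X_{2}$. Conversely, an isomorphism of graphs $X_{1}\to X_{2}$ identifies the sets of admissible choices of origin and generating set and transports universal covers, boundary actions, Patterson--Sullivan measures and therefore Dirac operators and traces; feeding this identification through Definition~\ref{equivzeta} produces a bijection $\ZXo\to\ZXt$ that is the identity on the underlying functions, so $\ZXo=\ZXt$. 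Combining the two cases yields the dichotomy.

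I expect the real obstacle to be the recovery of $\nu_{i}$ from a \emph{single} row $(\zeta_{a}^{X_{i}})_{a\in A_{\infty}}$ together with the alignment of the a priori unrelated choices entering the two rows: one must verify that the coefficients of $\Tr(a|D_{i}|^{s})$, as $a$ ranges over $A_{\infty}$, separate the cylinder masses finely enough to pin down $\nu_{i}$ exactly (not merely up to scale or measure class), which is where the precise form of $D$ in Definition~\ref{dirac} does the work. Once the measures are known to coincide, the remaining passage to a graph isomorphism is a formal consequence of Propositions~\ref{giso} and~\ref{rigid}.
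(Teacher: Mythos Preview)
Your strategy is exactly the paper's: Proposition~\ref{zetaone} forces equal genus, then one recovers the pulled-back Patterson--Sullivan measures $\nu_{i}=\Phi_{X_{i}}^{*}\mu_{i}$ on $\bdf$ from the zeta row, and the equivariant boundary homeomorphism $\Phi_{X_{2}}\circ\Phi_{X_{1}}^{-1}$ is therefore non-singular, so Proposition~\ref{rigid} yields $X_{1}\cong X_{2}$; the converse (isomorphic graphs give equal zeta sets) is handled the same way.

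Two small points to fix. Your appeal to ``the zeta function rigidity Theorem~\ref{czet}'' in the measure-recovery step is circular --- Theorem~\ref{czet} \emph{is} the statement you are proving. The result you actually need here is Proposition~\ref{meas}, which shows $\lim_{s\to-\infty}\zeta_{a}^{X_{i}}(s)=\int_{\bdf}a\,\ud\nu_{i}$, so equal rows give $\nu_{1}=\nu_{2}$ immediately and no separate ``reading off coefficients'' argument is required. Relatedly, the convergence of $\Tr(a|D|^{s})$ is for $\Re(s)\ll 0$, not for $\Re(s)$ large (the eigenvalues $\lambda_{n}=(\dim A_{n})^{3}$ grow), and the measure is extracted precisely by sending $s\to-\infty$ to isolate the $\lambda_{0}^{s}=1$ term.
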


\begin{Rem}
This theorem answers the non-commutative isospectrality question for graphs, namely that one can retrieve the graph by its non-commutative spectra. The spectrum of the Dirac operator itself, contained in $\zeta_{1}(s)=\Tr(|D|^{s})$, does not determine the graph. In fact, it only determines the genus by the innocent zeta function (cf. formula \ref{zeta1}):
\bea 
\zeta^{X}_{1}(s) = 1 + (2g)^{3s}(2g-1)\Bigl\{ \frac{1-(2g-1)^{3s-1}}{1-(2g-1)^{3s+1}} \Bigr\}
\eea
\end{Rem}

A further motivation for this article was to understand what morphisms in the (so far non-existing) category of spectral triples should be. To achieve this, one can for instance try to map the objects of known categories (in a sensible manner) into the objects of 'the category of spectral triples' (i.e. the spectral triples itself) and then study what happens with morphisms in the known category. In this article we map the objects of the category of graphs into the objects of the 'category of spectral triples'. Note that the constructed triple is in fact commutative. In the framework of spectral triples there should also be morphisms not induced by the commutative analogue, but should for instance also entail Morita equivalence.

\section{Preliminaries}
In this section we will recall some well-known facts about graphs.
\begin{Tex}
Let $X$ be a finite, connected, unoriented graph with first Betti number $g \geq 2$ and valencies $\geq 3$. We will assume this throughout this paper. Let $\ct$ denote the universal covering tree of $X$ and let $\Gamma$ be the fundamental group of $X$. Then $\Gamma$ is a free group of rank $g$ and acts freely on $\ct$, with
\bea
X \cong \ct / \Gamma
\eea
(see for instance the introduction of \cite{ro1}).
\end{Tex}

\begin{Rem}
We will not distinguish between regarding an element of $\Gamma$ as closed path or as isometry of $E_{X}$, it will be clear from the context.
\end{Rem}

\begin{Defi}[Distance on $\ct$] \label{distance}
The tree $\ct$ is a complete, geodesic, metric space for a natural distance function $l$: Every edge is defined to be a closed interval of fixed length $L \in \R_{>0}$. Since two points in the tree are connected by a unique sequence of edges, this induces a distance between them.
\end{Defi}

\begin{Defi}[Boundary]
Let $X$ be a finite, connected graph with universal covering tree $\ct$.\\
The boundary $\bd$ of $\ct$ consists of all infinite words $w=w_{0}w_{1}w_{2}\ldots$ in adjoining, unoriented edges, without backtracking, in the covering tree. Two such words are considered the same if they agree from some point on; more precisely $w=w'$ if there exist $N,K$ such that for all $n \geq N$ we have $w_{n+K}=w_{n}'$.
\end{Defi}

\begin{Rem}
One can also define a boundary by means of a 'scalar product' (see Chapter 2 of \cite{co2}), in our case these two coincide.
\end{Rem}

\begin{Tex}
Given a vertex $V \in \ct$ and an element $w \in \bd$, then $w$ can be uniquely represented by a word starting at vertex $V$. This specific representation of $w$ will be denoted by $[V,w)$. \\
Given two distinct points $\xi_{1},\xi_{2} \in \bd$, we can form the unique geodesic connecting these, which will be denoted by $]\xi_{1},\xi_{2}[$.
\end{Tex}

\begin{Defi}[Distance on the boundary]
Let $O$ be a distinguished vertex in the universal covering tree $\ct$, called the origin. The distance $d_{X,O}$ on $\bd$ is defined by
\bea
d_{X,O}: \bd \times \bd & \to & \R_{\geq 0}  \nn \\
d(w,w') & = & 2^{-n},
\eea
with
\bea
n & = & \# \{ [O,w) \cap [O,w') \} \in \N \cup  \{+\infty \},
\eea
the number of coinciding edges of the two words, with the convention $2^{-\infty}=0$.
\end{Defi}

\begin{Tex}
The distance turns $\bd$ into a metric space which induces a topology on the boundary $\bd$.\\
For fixed $v=[O,v)=v_{0}v_{1}\ldots \in \bd$ and $N \in \N_{>0}$, basic open balls around $v$ of radius $2^{-N}$ are described by
\bea
B(v,2^{-N}) & = & \{ w \in \bd \mid d(w,v) < 2^{-N} \}  \nn \\
& = & \{ w \in \bd \mid [O,w) = v_{0}v_{1}\ldots v_{N+1}Q \text{ where $Q$ runs over} \nn \\
& & \text{all infinite paths with no backtracking starting at the} \nn \\
& & \text{endpoint of the path } P=v_{0} \ldots v_{N+1} \}
\eea
We will denote this open ball by $U_{P}$ (see figure \ref{fig:circulator:1}).
\begin{figure}[htbp]
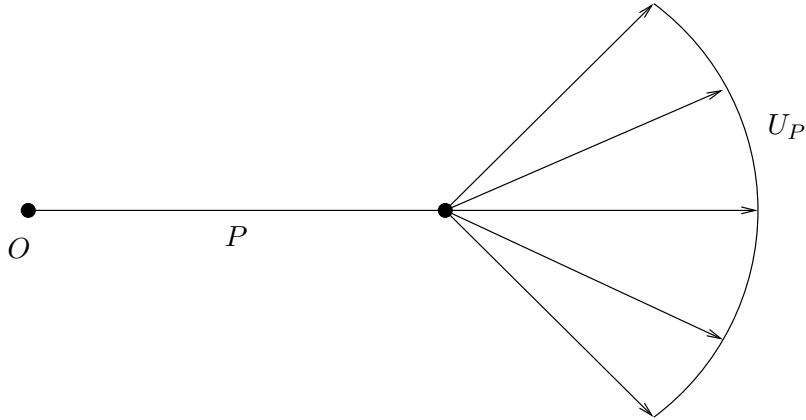

  \begin{center}
   \input open_t 
    \caption{The open set $U_{P}$.}
    \label{fig:circulator:1}
   \end{center}
 \end{figure}
\end{Tex}

\begin{Prop}
The boundary is a totally disconnected Hausdorff space and the set $U_{P}$ as above is clopen (open and closed).
\end{Prop}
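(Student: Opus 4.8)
The plan is to notice that the distance $d_{X,O}$ is not just a metric but an \emph{ultrametric}, and then to derive all three assertions from the standard soft facts about ultrametric spaces. First I would verify the strong triangle inequality $d(w,w'')\leq\max\{d(w,w'),d(w',w'')\}$: if $[O,w)$ and $[O,w')$ agree on their first $n$ edges and $[O,w')$ and $[O,w'')$ agree on their first $m$ edges, then $[O,w)$ and $[O,w'')$ must agree on at least their first $\min\{n,m\}$ edges, so $d(w,w'')=2^{-\#\{[O,w)\cap[O,w'')\}}\leq 2^{-\min\{n,m\}}=\max\{d(w,w'),d(w',w'')\}$, with the convention $2^{-\infty}=0$. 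Symmetry and the implication $d(w,w')=0\Rightarrow w=w'$ are immediate from the definition (two words with infinitely many common initial edges are equal), so $(\bd,d_{X,O})$ is a metric space, and in particular Hausdorff; equivalently, for $w\neq w'$ with $r=d(w,w')>0$ the balls $B(w,r)$ and $B(w',r)$ are disjoint by the ultrametric inequality.

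Next I would record the two purely ultrametric lemmas. \emph{Every open ball $B(v,\varepsilon)$ is also closed:} if $w$ is a limit point of $B(v,\varepsilon)$, choose $w'\in B(v,\varepsilon)$ with $d(w,w')<\varepsilon$; then $d(w,v)\leq\max\{d(w,w'),d(w',v)\}<\varepsilon$, so $w\in B(v,\varepsilon)$. Since the displayed description exhibits $U_{P}$ precisely as the ball $B(v,2^{-N})$, this gives that $U_{P}$ is clopen. One can also argue combinatorially: $\bd\setminus U_{P}$ is the union of the sets $U_{P'}$ with $P'$ ranging over the finitely many non-backtracking paths from $O$ of the same length as $P$ and different from $P$ (finitely many because the valencies are finite), hence a finite union of basic open sets, hence open; so $U_{P}$ is closed.

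Finally, for total disconnectedness I would take any $C\subseteq\bd$ containing two distinct points $w\neq w'$, put $r=d(w,w')>0$, and split $C=(C\cap B(w,r))\sqcup(C\setminus B(w,r))$. Both parts are relatively open in $C$ --- the second because $B(w,r)$ is closed by the lemma --- and both are non-empty, since $w$ lies in the first and $w'$ in the second (as $d(w',w)=r$, so $w'\notin B(w,r)$). Hence no such $C$ is connected, i.e.\ the connected components of $\bd$ are singletons.

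I do not expect a real obstacle: the whole statement is a consequence of the ultrametric inequality, and the only thing to be careful about is the bookkeeping in the first step --- that $\min\{n,m\}$, and not something smaller, is a lower bound for the number of shared initial edges --- together with the minor index matching between the radius $2^{-N}$ and the length of the common prefix $P$.
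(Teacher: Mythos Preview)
Your proof is correct. The paper argues more directly and combinatorially, without ever isolating the ultrametric inequality: for Hausdorff it separates two distinct boundary points $w,w'$ by the sets $U_{P},U_{Q}$ where $P,Q$ are their initial segments up to the first discrepancy; for clopenness it writes $U_{P}^{c}=\bigcup_{R\neq P,\ l(R)=l(P)}U_{R}$ explicitly (this is exactly your ``combinatorial alternative''); and for total disconnectedness it uses this clopen $U_{P}$ to disconnect any subset containing two distinct points. Your route is more conceptual: once you have checked $d(w,w'')\leq\max\{d(w,w'),d(w',w'')\}$, all three assertions are instances of general facts about ultrametric spaces and would apply verbatim to any such space. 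The paper's hands-on version, on the other hand, never needs the metric beyond the definition of the basic sets $U_{P}$, and makes the finite partition $\bd=\bigsqcup_{l(R)=l(P)}U_{R}$ visible --- a fact used later when describing $A_{\infty}$ and its filtration.
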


\begin{proof}
For the Hausdorff property: Let $[O,w),[O,w')$ be two distinct elements of $\bd$, then there exists an $N$ such that $w_{N} \neq w'_{N}$, define $P = w_{0}\ldots w_{N}$ and $Q=w'_{0}\ldots w'_{N}$, then $w \in U_{P}$ and $w' \in U_{Q}$ and $U_{P} \cap U_{Q} = \emptyset$, so $\bd$ is Hausdorff.\\
Now we show that $U_{P}^{c}$ is open:
\bea
U_{P}^{c} & = & \bigcup_{R \neq P, l(R)=l(P)}U_{R},
\eea
where $l$ assigns to a word its length as in Definition \ref{distance}. This is a union of open sets and hence open.\\
Finally, suppose that $w$ and $w'$ are in a connected component $A$ of $\bd$, then, $U_{P} \cap A$ and $U_{P}^{c} \cap A$ is a separation of $A$ and hence $w=w'$, so $\bd$ is totally disconnected.
\end{proof}

\begin{Defi}[Critical exponent]
Let $\Gamma$ be the fundamental group of $X$, acting as isometry on $\ct$. Consider the formal Poincar\'e series,
\bea
\sum_{\gamma \in \Gamma}e^{-s d(O,\gamma O)}
\eea
This series converges absolutely for sufficiently big enough $\Re(s) \in \R \cup \{ \infty \}$ (with the convention $e^{-\infty}=0$).\\
Define $\delta \in \R_{\geq 0} \cup \{ \infty \}$ as the critical exponent, i.e. for $\Re(s) > \delta$ the series converges and for $\Re(s) < \delta$, the series diverges. If $\delta \notin \{0,\infty\}$, then $\Gamma$ is called a \emph{divergence group} with respect to its action on $X$ (see for instance \cite{her}). In our case $\Gamma$ acts as divergence group, see Remark \ref{check}.
\end{Defi}

\begin{Defi}[Patterson-Sullivan measure on the boundary] \label{patter}
Let $\delta$ be the critical exponent of $\Gamma$ acting on $E_{X}$ and let $D_{P}$ be the Dirac measure at the point $P \in E_{X}$, i.e. $D_{P}(U)=1$ if $P \in U$ and $0$ else.\\
Define the Patterson-Sullivan measure as
\bea
\mu =\mu_{\text{PS},O}=\lim_{s \downarrow \delta} \big( \frac{\sum_{\gamma \in \Gamma}e^{-s d(O,\gamma O)}D_{\gamma O}}{\sum_{\gamma \in \Gamma}e^{-s d(O,\gamma O)}}\big).
\eea
(see \cite{her}). This converges weakly to a probability measure on $X \cup \bd$ with support on the boundary $\bd$ and hence turns $\bd$ into a measure space.
\end{Defi}

\section{The spectral triple}
In this section we will define a finitely summable spectral triple in the sense of Connes, a non-commutative analogue of Riemannian manifolds, and we will study its associated zeta function formalism.

\begin{Defi}[Spectral triples] \label{s3}
A unital \emph{spectral triple} consists of a triple $(A,H,D)$, where $A$ is a unital $C^{*}$-algebra, $H$ a Hilbert-space on which $A$ faithfully acts by bounded operators and $D$ is an unbounded, self adjoint operator, densely defined on $H$, such that $(D-\lambda)^{-1}$ is a compact operator for $\lambda \notin \text{Spec}(D)$, and such that all the commutators $[D,a]$ are bounded operators for $a$ in a dense, involutive subalgebra $A_{\infty} \subset A$ satisfying $a(\text{Dom}(D)) \subset \text{Dom}(D)$ (see \cite{con}).\\
A triple $(A,H,D)$ is called $p$-summable if the trace $\Tr((1+D^{2})^{-p/2})$ is finite.
\end{Defi}

\begin{Defi}
Let $\{ \gamma_{1},\ldots,\gamma_{g}\}$ be a set of generators of the fundamental group $\Gamma$ of $X$. Define $F_{g}$ as the graph consisting of one vertex with $g$ loops attached to it, corresponding to the generators of the fundamental group. Denote by $\bdf$ the boundary of the universal covering tree of this graph  (see figure \ref{fig:circulator:3}). This corresponds to the boundary of the Cayley-graph of $\Gamma$, with respect to the chosen generators of the fundamental group (see for instance \cite{grom}).
\end{Defi}

\begin{figure}[htbp]
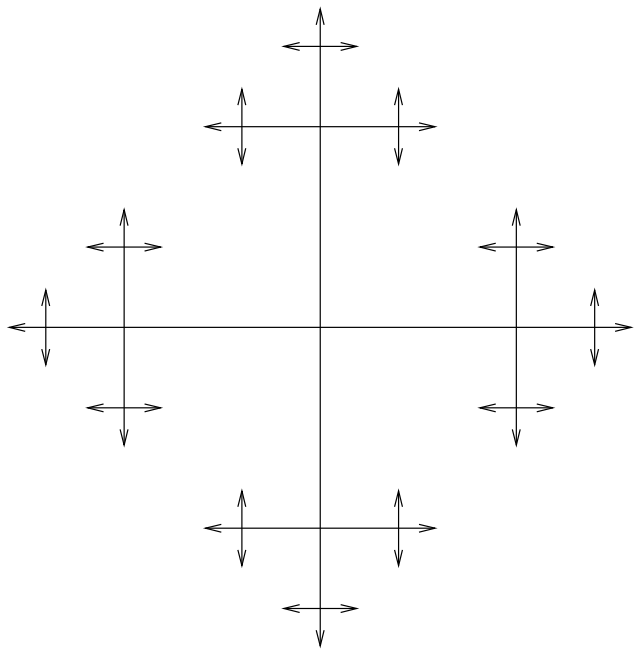

  \begin{center}
   \input f2_t 
    \caption{The covering tree of $F_{2}$.}
    \label{fig:circulator:3}
   \end{center}
\end{figure}

We will use Theorem 4.1 of chapter $4$ in \cite{co2} with $Y=E_{X}$ and $G=\Gamma$, the fundamental group of $X$, whose boundary is denoted by $\partial F_{g}$, which satisfy the hypotheses of the theorem:

\begin{Prop} \label{giso}
Let $Y$ be a proper geodesic space and let $G$ be an isometry group of $Y$, acting properly discontinuous, such that the quotient $Y/G$ is compact. Then $G$ is hyperbolic if and only if $Y$ is. Moreover, if $G$ (and hence $Y$) is hyperbolic, then there is a canonical homeomorphism:
\bea
\Phi_{Y}: \partial G \to \partial Y.
\eea
\end{Prop}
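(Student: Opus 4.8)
The plan is to obtain this as a consequence of two classical facts of the theory of hyperbolic groups: the Milnor--\v{S}varc lemma, and the quasi-isometry invariance of both hyperbolicity and the Gromov boundary. This is exactly the content of the quoted Theorem~4.1 of Chapter~4 of \cite{co2}; here I indicate the argument.

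First I would apply the Milnor--\v{S}varc lemma. Since $Y$ is a proper geodesic space and $G$ acts on it by isometries, properly discontinuously and with compact quotient, $G$ is finitely generated and, after fixing a basepoint $y_{0} \in Y$ and a finite generating set $S$, the orbit map
\bea
\varphi : G \lra Y, \qquad g \longmapsto g\,y_{0},
\eea
is a quasi-isometry, where $G$ carries the word metric $d_{S}$. Indeed, compactness of $Y/G$ provides an $R>0$ with $G\cdot B(y_{0},R)=Y$; properness of $Y$ together with proper discontinuity of the action makes $S_{0}=\{g \in G : d(y_{0},g\,y_{0}) \le 2R+1\}$ finite, and a chaining argument along a geodesic segment from $y_{0}$ to $g\,y_{0}$ shows that $S_{0}$ generates $G$ and yields the two-sided linear comparison between $d_{S_{0}}(1,g)$ and $d(y_{0},g\,y_{0})$; passing to any other finite generating set only changes the word metric by a quasi-isometry.

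Second, I would invoke that hyperbolicity is a quasi-isometry invariant among geodesic spaces. The Cayley graph of $(G,S)$ is a proper geodesic space, quasi-isometric to $Y$ via $\varphi$, hence it is $\delta$-hyperbolic for some $\delta$ if and only if $Y$ is; this is the assertion that $G$ is hyperbolic if and only if $Y$ is. The mechanism is the Morse stability lemma: in a hyperbolic space every $(\lambda,\varepsilon)$-quasi-geodesic stays within a Hausdorff distance $H(\lambda,\varepsilon,\delta)$ of a genuine geodesic with the same endpoints, so one may transport thin triangles across $\varphi$. Finally, to produce $\Phi_{Y}$, I would use that any quasi-isometry $f\colon Z_{1}\to Z_{2}$ between proper geodesic hyperbolic spaces induces a homeomorphism $\partial f\colon \partial Z_{1}\to\partial Z_{2}$: $f$ carries geodesic rays to quasi-geodesic rays, the Morse lemma replaces each by an asymptotic genuine ray, two rays are asymptotic if and only if their images are (giving a well-defined bijection on equivalence classes of rays), and bicontinuity follows because the boundary topology is generated by ``shadows'' of rays, which $f$ distorts only by bounded amounts absorbed into enlarging the shadow parameter. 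Applying this with $Z_{1}$ the Cayley graph of $(G,S)$, whose boundary is $\partial G$ (in the notation above, $\bdf$), and $Z_{2}=Y$, gives the homeomorphism; it is canonical because changing $y_{0}$ or $S$ changes $\varphi$ by a bounded amount, and bounded perturbations of a quasi-isometry induce the same boundary map.

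The step I expect to be the main obstacle is the last one: checking carefully that $\partial f$ is a homeomorphism, in particular that it is continuous with continuous inverse. This forces one to commit to a precise model of the boundary (equivalence classes of geodesic rays with the fellow-traveller topology, versus the completion with respect to the extended Gromov product) and to verify that the two descriptions agree; the Morse lemma supplies the geometry, but propagating the quasi-geodesic constants consistently through the comparison of neighbourhood bases is the delicate bookkeeping.
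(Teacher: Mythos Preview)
Your proposal is correct and follows exactly the approach the paper indicates: the paper does not prove this proposition but quotes it from \cite{co2}, and in the paragraph immediately following the statement it sketches precisely your outline---the orbit map $g \mapsto g(O)$ is a quasi-isometry (Milnor--\v{S}varc), and a quasi-isometry induces a homeomorphism of boundaries (cited as Theorem~2.2 of \cite{co2}). Your write-up simply supplies the details behind those two citations.
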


\begin{Tex}
For a definition of hyperbolic I refer to \cite{grom}. Properly discontinuous means that for any compact set $K \subset Y$ the set
\bea
\{ g \in G \mid g(K) \cap K \neq \emptyset \}
\eea
is finite.\\
After fixing an origin $O$, the map $\Phi_{Y}$ is induced by the map
\bea
\phi_{Y}: G & \to & Y \nn \\
g & \mapsto & g(O),
\eea
which is a quasi-isometry, which means that there are $K \geq 1$ and $C \geq 0$ such that $\frac{1}{K}d(g_{1},g_{2}) - C \leq d(\phi_{Y}(g_{1},g_{2})) \leq K d(g_{1},g_{2}) + C$ for all $g_{1},g_{2} \in G$. By Theorem 2.2 of \cite{co2}, the map $\phi_{Y}$ then induces a map on the boundaries. The map $\phi_{Y}$ is $G$-equivariant with respect to the left action of $G$ on itself (see next Definition \ref{equiv}) and hence the map $\Phi_{Y}$ is $G$-equivariant as well.
\end{Tex}

\begin{Defi}[Equivariant maps]\label{equiv}
Let $Y_{1},Y_{2}$ be sets. Suppose that the group $G_{i}$ acts on $Y_{i}$ and let $\alpha:G_{1} \to G_{2}$ be a group homomorphism, then a map $f:Y_{1} \rightarrow Y_{2}$ is called \emph{equivariant} (with respect to $G_{1},G_{2},\alpha$), if for all $g \in G_{1}, y \in Y_{1}$ we have
\[
f(g.y)=\alpha(g).f(y)
\]
\end{Defi}
\begin{Rem}
Note that the map $\Phi_{Y}$ is not determined by the abstract group $G$, but depends on the representation of $G$ as an isometry group of $Y$ and on a choice of generators when passing to the Cayley graph. Different choices do not give isometric, but quasi-isometric Cayley graphs.
\end{Rem}

\begin{Defi}[Algebra of functions, Hilbert space]
Let $A_{X}$ be the algebra of continuous, $\C$-valued functions on $\p F_{g}$, i.e. $C(\p F_{g},\C)$. This algebra contains the subalgebra $A_{X,\infty}=C(\p F_{g},\Z) \otimes_{\Z} \C$ of locally constant functions.
Let $\mu$ be the Patterson-Sullivan measure on $\bd$ (see Definition \ref{patter}), then the measure $\Phi_{X}^{*}(\mu)$ (see Proposition \ref{giso}) induces an inner product by integration on $\p F_{g}$ and hence a norm on $A_{X}$. Let $H_{X}$ be the completion of $A_{X}$ with respect to this norm.
\end{Defi}

\begin{Rem}
Note that after fixing the genus, only $H_{X}$ depends on the graph $X$, especially the innerproduct on $H_{X}$.\\
We will use the notation $A=A_{X}$ and $A_{\infty}=A_{X,\infty}$ and $H=H_{X}$ for the algebras.
\end{Rem}

\begin{Prop}
The subalgebra $A_{\infty}$ is dense in $A$.
\end{Prop}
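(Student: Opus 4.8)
The plan is to deduce the density from the complex Stone--Weierstrass theorem, so the work splits into three pieces: pinning down the topology of $\bdf$, identifying $A_{\infty}$ concretely, and checking the hypotheses of Stone--Weierstrass. First I would record that $\bdf$ is a compact, metrizable, totally disconnected space with a countable basis of clopen sets: since $X$, and hence $F_{g}$, has finite valencies, the universal covering tree of $F_{g}$ is locally finite, so for each $n$ there are only finitely many no-backtracking paths $P$ from the origin $O$ of combinatorial length $n$; the corresponding sets $U_{P}$ then form a \emph{finite} clopen partition of $\bdf$ whose mesh tends to $0$, which makes $\bdf$ totally bounded and complete for $d_{X,O}$, hence compact. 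Next I would identify $A_{\infty}$: a continuous map $\bdf \to \Z$ ($\Z$ discrete) is locally constant and, by compactness, assumes finitely many values, each value-set being a finite disjoint union of sets $U_{P}$; hence such a map is an integral combination of indicator functions $\chi_{U_{P}}$, and after tensoring with $\C$ one sees that $A_{\infty}=C(\bdf,\Z)\otimes_{\Z}\C$ equals the $\C$-span of the indicators of clopen sets, i.e. the locally constant $\C$-valued functions.

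With this in hand I would verify that $A_{\infty}$ is a unital $*$-subalgebra of $A=C(\bdf,\C)$ separating points. It is unital ($1=\chi_{\bdf}$), stable under complex conjugation (the $\chi_{U}$ are real-valued), and closed under multiplication since $\chi_{U}\chi_{V}=\chi_{U\cap V}$ and intersections of clopen sets are clopen. For separation, given $\xi\neq\xi'$ in $\bdf$ one takes $P$ to be an initial segment of $[O,\xi)$ long enough that it is not an initial segment of $[O,\xi')$; then $\xi\in U_{P}$ while $\xi'\notin U_{P}$, exactly as in the proof that $\bd$ is Hausdorff applied to $X=F_{g}$, so $\chi_{U_{P}}\in A_{\infty}$ separates $\xi$ and $\xi'$. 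The complex Stone--Weierstrass theorem then gives that $A_{\infty}$ is dense in $A$ in the supremum norm, which is the $C^{*}$-norm, finishing the proof.

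If one prefers to avoid quoting Stone--Weierstrass, the same conclusion can be reached directly: given $f\in A$ and $\varepsilon>0$, uniform continuity of $f$ on the compact metric space $\bdf$ provides an $n$ for which $f$ oscillates by less than $\varepsilon$ on every block of the finite partition by paths of combinatorial length $n$; choosing $w_{P}\in U_{P}$ in each block and putting $\tilde f=\sum_{P}f(w_{P})\,\chi_{U_{P}}\in A_{\infty}$ yields $\|f-\tilde f\|_{\infty}<\varepsilon$. I do not expect a genuine obstacle in this argument; the only delicate points are bookkeeping ones — making sure that the tensor-product definition of $A_{\infty}$ really does capture all locally constant functions, and that compactness of $\bdf$ (together with finiteness of the partitions) is invoked, this being the one place where the standing hypothesis of finite valencies is essential.
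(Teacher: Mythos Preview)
Your proof is correct and takes essentially the same approach as the paper, which simply records that $\bdf$ is a totally disconnected Hausdorff space with the sets $U_{P}$ forming a basis and leaves the rest implicit. Your Stone--Weierstrass argument (and the alternative direct uniform-continuity approximation) is exactly the standard way to unpack that one-line remark.
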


\begin{proof}
The boundary is a totally disconnected Hausdorff space. The sets $U_{P}$ form a basis for the topology and hence the result. 
\end{proof}

\begin{Defi}[Dirac operator] \label{dirac}
The space $A_{\infty}$ has a natural filtration $\{A_{n}\}_{n \geq 0}$ by setting
\bea
A_{n}=\text{span}_{\C} \{ 1_{P} \mid P \text{ a finite word of length } \leq n \}.
\eea
This filtration is inherited by its completion $H$. Let $P_{n}$ be the orthogonal projection operator on $A_{n}$, with respect to the inner product on $H$, 
\bea
P_{n}: H \to A_{n} \subset H.
\eea
The Dirac operator,
\bea
D & : & H \to H,
\eea
is defined by 
\bea
D & = &\lambda_{0} P_{0}+ \sum_{n \geq 1} \lambda_{n}Q_{n},
\eea
with
\bea
Q_{n} & = & P_{n} - P_{n-1}, \\
\lambda_{n} & = & (\dim A_{n})^{3}.
\eea
\end{Defi}

\begin{Rem}
As it turns out, we will only need the operator $|D|$ (via the zeta functions), which leaves room for a possibly interesting sign. For instance, in the case of non-commutative manifolds, the sign $F$ in $D=F |D|$, gives rise to the fundamental $K$-cohomology class of the manifold and plays an important role in the index formula (see for instance \cite{con3,nig}). A sign $F$ must satisfy $F^{2}=1$, $[F,|D|]=0$, must be bounded and $[a,F]$ must be compact.
\end{Rem}

\begin{Lemm}
We have:
\renewcommand{\labelenumi}{\alph{enumi}.}
\begin{enumerate}
\item $\dim A_{0} = 1$,
\item $\dim A_{1} \ominus A_{0} = 2g-1$
\end{enumerate}
For $n \geq 1$ we have
\begin{enumerate}
\renewcommand{\labelenumi}{\roman{enumi}.}
\item $\dim A_{n} = 2g(2g-1)^{n-1}$,
\item $\dim A_{n+1} \ominus A_{n} = (2g)(2g-2)(2g-1)^{n-2}$.
\end{enumerate}
\end{Lemm}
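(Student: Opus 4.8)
The plan is to reduce all four assertions to counting non-backtracking edge-paths in the universal covering tree of $F_{g}$, together with one elementary observation about indicator functions on a totally disconnected space.

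First I would record the combinatorics of the covering tree $T$ of $F_{g}$. Since $F_{g}$ consists of a single vertex carrying $g$ loops, that vertex has valency $2g$, so $T$ is the homogeneous tree of valency $2g$ and the origin $O$ is one of its vertices. A word $P$ of length $n$ ``starting at $O$'' (in the sense used to define $U_{P}$) is a non-backtracking sequence of $n$ adjoining edges issuing from $O$; the first edge may be chosen in $2g$ ways and every subsequent edge in $2g-1$ ways, since the only forbidden choice is the reverse of the preceding edge. Writing $W_{n}$ for the set of such words, this gives $|W_{0}| = 1$ and $|W_{n}| = 2g(2g-1)^{n-1}$ for $n \geq 1$.

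Next I would exhibit an explicit basis of $A_{n}$. For $P \in W_{n}$ let $1_{P}$ denote the indicator of the clopen ball $U_{P} \subseteq \partial F_{g}$. Because $2g \geq 4$, each $U_{P}$ is nonempty (indeed infinite), and for fixed $n$ the family $\{ U_{P} \mid P \in W_{n} \}$ is a partition of $\partial F_{g}$: every boundary point has a unique representative starting at $O$, whose length-$n$ prefix lies in $W_{n}$. Hence $\{ 1_{P} \mid P \in W_{n} \}$ is linearly independent (evaluate a putative relation at a point of a single $U_{P}$). Moreover, if $Q$ has length $k \leq n$ then $U_{Q}$ is the disjoint union of the $U_{P}$ with $P \in W_{n}$ extending $Q$, so $1_{Q} = \sum 1_{P}$ over those $P$; therefore $A_{n} = \mathrm{span}_{\mathbb{C}} \{ 1_{P} \mid l(P) \leq n \} = \mathrm{span}_{\mathbb{C}} \{ 1_{P} \mid P \in W_{n} \}$, and the latter set is a basis. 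Consequently $\dim A_{n} = |W_{n}|$, which is statement (a) for $n = 0$ and statement (i) for $n \geq 1$.

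Finally, since $A_{n-1} \subseteq A_{n}$ we have $\dim (A_{n} \ominus A_{n-1}) = \dim A_{n} - \dim A_{n-1}$. For $n = 1$ this equals $2g - 1$, which is (b); for $n \geq 2$ it equals $2g(2g-1)^{n-1} - 2g(2g-1)^{n-2} = 2g(2g-2)(2g-1)^{n-2}$, which is (ii) (after the evident index shift). I do not expect a genuine obstacle here: the whole argument is elementary counting plus the fact that a clopen ball on the boundary is the finite disjoint union of its length-$n$ sub-balls. The only points deserving a line of care are checking that the covering tree of $F_{g}$ is exactly $2g$-regular (so that the recursion $2g, 2g-1, 2g-1, \dots$ is correct) and verifying that the passage from the generating set indexed by words of length $\leq n$ to the basis indexed by words of length exactly $n$ is legitimate, i.e. that the refinement identities $1_{Q} = \sum_{P \supseteq Q} 1_{P}$ account for all linear relations among the $1_{P}$.
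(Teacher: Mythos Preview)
Your argument is correct and follows the same path-counting idea as the paper, which simply observes that $2g$ edges emanate from the origin and $2g-1$ choices are available at each subsequent step. Your version is more careful in that you explicitly verify $\{1_P \mid P \in W_n\}$ is a basis of $A_n$ (via the partition property and the refinement identities $1_Q = \sum_{P \supseteq Q} 1_P$), a point the paper leaves implicit; you also correctly flag the index shift in item (ii).
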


\begin{proof}
From the origin $2g$ edges emerge and every next step there are $2g-1$ choices, giving the results.
\end{proof}

\begin{Prop}
The data $(A,H,D)$ as constructed forms a $1$-summable spectral triple.
\end{Prop}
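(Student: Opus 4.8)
The plan is to verify the four requirements in Definition \ref{s3} one at a time: (i) $A$ acts faithfully on $H$ by bounded operators; (ii) $D$ is self-adjoint and densely defined with compact resolvent; (iii) $[D,a]$ is bounded for $a \in A_\infty$ and $a(\mathrm{Dom}(D)) \subset \mathrm{Dom}(D)$; and finally (iv) the $1$-summability, i.e. $\Tr((1+D^2)^{-1/2}) < \infty$. For (i), $A$ acts on $H$ by pointwise multiplication; since $H$ is the $L^2$-completion of $A$ with respect to the probability measure $\Phi_X^*(\mu)$, multiplication by $f \in A$ is bounded with operator norm $\|f\|_\infty$ (using that the measure has full support on $\partial F_g$, so that $\|\cdot\|_\infty$ is recovered), giving faithfulness as well. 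For (ii), note that the $Q_n$ together with $P_0$ form a complete orthogonal family of finite-rank projections (since $\bigcup_n A_n = A_\infty$ is dense in $H$), so $D = \lambda_0 P_0 + \sum_{n\ge1}\lambda_n Q_n$ is a diagonal operator with real eigenvalues $\lambda_n = (\dim A_n)^3$ of finite multiplicity tending to $+\infty$; it is therefore essentially self-adjoint on the algebraic span of the ranges, and $(D-\lambda)^{-1}$ is a norm-limit of finite-rank operators, hence compact.

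For (iii), the key point is that for a basic locally constant function $1_P$ with $P$ a finite word of length $\ell$, multiplication by $1_P$ maps $A_n$ into $A_{\max(n,\ell)}$ and, more importantly, the commutator $[D,1_P]$ is supported in finitely many gradings: writing $a = 1_P$, one checks that $Q_n a = a Q_n$ for $n > \ell$ because refining a word beyond length $\ell$ commutes with intersecting with the cylinder $U_P$. Hence $[D,a] = \sum_{n \le \ell+1}\lambda_n [Q_n,a]$ is a finite sum of bounded (finite-rank) operators and is bounded; by linearity this extends to all of $A_\infty$. The invariance $a(\mathrm{Dom}(D)) \subset \mathrm{Dom}(D)$ follows since $\mathrm{Dom}(D) = \{\xi : \sum \lambda_n^2 \|Q_n\xi\|^2 < \infty\}$ and $a$ changes only finitely many components of the grading decomposition while being bounded on each. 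One should double-check that $A_\infty$ is involutive (clear, it is an algebra of $\C$-valued locally constant functions closed under conjugation) and dense in $A$ (already proved above).

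The main obstacle is (iv), the $1$-summability estimate. Here one uses the Lemma just proved: $\dim A_n = 2g(2g-1)^{n-1}$ for $n \ge 1$, so the multiplicity of the eigenvalue $\lambda_n$ is $\dim Q_n = \dim A_{n+1} - \dim A_n = (2g)(2g-2)(2g-1)^{n-2}$ for $n \ge 1$ (with small corrections for $n=0,1$), while $\lambda_n = (\dim A_n)^3 \sim c\,(2g-1)^{3n}$. Thus
\bea
\Tr((1+D^2)^{-1/2}) = \sum_{n} \frac{\dim Q_n}{(1+\lambda_n^2)^{1/2}} \asymp \sum_{n \ge 1} \frac{(2g-1)^{n}}{(2g-1)^{3n}} = \sum_{n \ge 1}(2g-1)^{-2n},
\eea
which converges since $2g-1 \ge 3$. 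The exponent $3$ in the definition $\lambda_n = (\dim A_n)^3$ is precisely what forces summability (an exponent $1$ would give a divergent harmonic-type series after accounting for multiplicities, an exponent $>1$ suffices), so I would present the exponent choice as the reason the triple is finitely — in fact $1$- — summable. This same computation, carried out with the exact constants from the Lemma, also yields the closed form for $\zeta^X_1(s) = \Tr(|D|^s)$ quoted in the introduction, so it is worth doing the bookkeeping carefully rather than only asymptotically.
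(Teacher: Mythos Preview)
Your argument is correct and matches the paper's proof almost step for step: bounded faithful action by multiplication, self-adjointness and compact resolvent from the diagonal form of $D$, vanishing of $[Q_m,a]$ once $m$ exceeds the word-length of $a$, and then the summability estimate. The only differences are cosmetic: the paper bounds the $1$-summability series more crudely via $\sum_n (\dim A_n)^{-2} \le \sum_n (n+1)^{-2}$ rather than through the exponential growth from the Lemma, and note a harmless off-by-one in your multiplicity count (the rank of $Q_n = P_n - P_{n-1}$ is $\dim A_n - \dim A_{n-1}$, not $\dim A_{n+1} - \dim A_n$).
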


\begin{proof}
The $*$-operation is complex conjugation, $A$ acts on $H$ by multiplication which is bounded. The operator $D$ is real and hence self-adjoint and the compactness of $(D+\lambda)^{-1}$ for $\lambda \notin \text{Spec}(D)$ is easily checked. For $a \in A_{n}$ and $m>n$ we have $P_{m}(a)=P_{m-1}(a)$. So $P_{m}|_{A_{k}}=P_{k}$ for $k \leq m$ and so $[Q_{m},a]=0$ for $m > n$, in particular $[D,a]$ is a finite linear combination of finite rank operators of the form $[Q_{i},a]$ and hence bounded.\\
For the $1$-summability, note that
\bea
\Tr((1+D^{2})^{-1/2}) & = & \sum_{n=0}^{\infty}(1+\lambda_{n}^{2})^{-1/2}(\dim A_{n} -\dim A_{n-1}) \nn \\
& \leq & \sum_{n=0}^{\infty}(1+\lambda_{n}^{2})^{-1/2} \dim A_{n}   \nn\\
& < & \sum_{n=0}^{\infty}(\dim A_{n})^{-2}  \nn \\
& \leq &\sum_{n=0}^{\infty} (n+1)^{-2}  \nn\\
& < & \infty,
\eea
where we used that $\dim A_{n} \geq n+1$.
\end{proof}

\section{Zeta functions}

\begin{Defi}[Zeta functions]
Finitely summable spectral triples give rise to zeta functions. For any $a \in A_{\infty}$ and $\Re(s) \ll 0$, the zeta function is defined by
\bea
\zeta^{X}_{a}(s) & = & \Tr_{H}(a|D|^{s}).
\eea
From the general framework of finitely summable spectral triples, it follows that this function can be extended to a meromorphic function on $\C$ (see Proposition $1$ of \cite{con2}).
\end{Defi}

\begin{Tex}
Let us expand $\zeta_{a}^{X}$ for later convenience. Let $I_{m}$ stand for an inductively obtained orthogonal basis for $A_{m}$ with $A_{k} \subset A_{k+1}$ for all $k \in \N_{\geq 0}$ (obtained by for instance a Gram-Schmidt orthogonalization procedure). By convention, $I_{0}$ corresponds to the constant functions and $I_{-1}=\emptyset$.\\
\bea
\zeta^{X}_{a}(s) & = & \Tr_{H}(a|D|^{s}) \nn \\
& = & \sum_{n \geq 0} \sum_{\Psi \in I_{n}-I_{n-1}} \langle \Psi| \lambda_{0}^{s}a P_{0}+\sum_{m \geq 1}\lambda_{m}^{s}a Q_{m} |\Psi \rangle  \nn\\
& = & \sum_{n \geq 0} \lambda_{n}^{s} c_{n}(a),
\eea
with
\bea
c_{n}(a) & := & \sum_{\Psi \in I_{n}-I_{n-1}} \langle \Psi|a|\Psi \rangle.
\eea
\end{Tex}

\begin{Defi}[Equivalence of zeta functions] \label{equivzeta}
As noted, the zeta functions constructed above depend on a choice of origin and on the representation of $\Gamma$ as group of isometries of $E_{X}$. To show the dependence in the notation we write $\zeta^{X,O,\alpha}_{a}$. Here $O \in X$ denotes the (arbitrary chosen) origin and $\alpha$ denotes a representation of $\Gamma$ as isometry group of $X$, including a minimal set of generators. Denote by $\text{R}(\Gamma,X)$ the set of all such $\alpha$'s.
Let
\bea
\ZX=\{(\zeta^{X,O,\alpha}_{a})_{a \in A_{\infty}}\ \mid O \in X, \alpha \in \text{R}(\Gamma,X)\}
\eea
denote the set of all indexed rows of zeta functions, which can be obtained by varying the origin and $\alpha$'s as described.
\end{Defi}

\begin{Prop} \label{zetaone}
The zeta function $\zeta_{1}^{X}$ does not depend on any choices and is equivalent to knowing the genus of $X$.
\end{Prop}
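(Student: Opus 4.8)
The plan is to notice that $\zeta_{1}^{X}(s)=\Tr_{H}(|D|^{s})$ sees nothing of the Patterson--Sullivan measure, and then to evaluate it explicitly. Concretely: every eigenvalue $\lambda_{n}=(\dim A_{n})^{3}$ of the operator $D$ from Definition~\ref{dirac} is positive, so $|D|=D$, and in the orthogonal block decomposition $H=\bigoplus_{n\geq 0}(A_{n}\ominus A_{n-1})$ the operator $|D|^{s}$ acts as multiplication by $\lambda_{n}^{s}$ on the $n$-th block. Taking the trace block by block gives
\bea
\zeta_{1}^{X}(s) & = & \sum_{n\geq 0}\lambda_{n}^{s}\,\rk(Q_{n}) \;=\; \sum_{n\geq 0}(\dim A_{n})^{3s}\,(\dim A_{n}-\dim A_{n-1}),
\eea
with the convention $\dim A_{-1}:=0$. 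First I would stress that the right-hand side involves only the \emph{dimensions} of the pieces of the filtration, not the inner product making them orthogonal; since those dimensions are computed purely combinatorially in the Lemma above ($2g$ edges out of the origin and $2g-1$ continuations at each subsequent step) they depend only on the genus $g$, and in particular not on the chosen origin $O$ nor on the chosen representation $\alpha\in\mathrm{R}(\Gamma,X)$ with its minimal generating set. This already settles the independence of $\zeta_{1}^{X}$ from the data entering Definition~\ref{equivzeta}.

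Next I would simply sum the series. Substituting $\dim A_{0}=1$, $\dim A_{1}-\dim A_{0}=2g-1$ and $\dim A_{n}-\dim A_{n-1}=2g(2g-2)(2g-1)^{n-2}$ for $n\geq2$, together with $\lambda_{0}=1$ and $\lambda_{n}=(2g)^{3}(2g-1)^{3(n-1)}$ for $n\geq1$, the tail becomes a geometric series of ratio $(2g-1)^{3s+1}$, which converges for $\Re(s)<-\tfrac13$; summing it and tidying the algebra (the identity $2g(2g-2)=(2g-1)^{2}-1$ is what collapses the resulting numerator) produces the closed form
\bea
\zeta_{1}^{X}(s) & = & 1+(2g)^{3s}(2g-1)\,\frac{1-(2g-1)^{3s-1}}{1-(2g-1)^{3s+1}},
\eea
i.e. exactly the expression quoted in the introduction, which is visibly meromorphic on all of $\C$ (in agreement with the general continuation from Proposition~1 of \cite{con2}). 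This step is routine bookkeeping.

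Finally, for the equivalence with the genus: the closed form is manifestly a function of $g$ alone, so $g$ determines $\zeta_{1}^{X}$; conversely, letting $s\to-\infty$ along the real axis one has $(2g-1)^{3s\pm1}\to0$, hence $\zeta_{1}^{X}(s)-1\sim(2g-1)(2g)^{3s}$ and $\tfrac1s\log\bigl(\zeta_{1}^{X}(s)-1\bigr)\to3\log(2g)$, which recovers $2g$ and hence $g$ (equivalently, $g$ can be read off the imaginary spacing $\tfrac{2\pi}{3\log(2g-1)}$ of the poles along $\Re(s)=-\tfrac13$). I do not expect a serious obstacle: the only point needing care is the assertion in the first paragraph that $\rk(Q_{n})=\dim A_{n}-\dim A_{n-1}$ is genuinely insensitive to the Hilbert-space structure, so that all the choices truly drop out before any computation; the rest is the convergence check for the geometric series and an elementary simplification.
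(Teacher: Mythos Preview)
Your proof is correct and follows essentially the same approach as the paper's own proof: both compute $\Tr(|D|^{s})=\sum_{n\geq0}\lambda_{n}^{s}(\dim A_{n}-\dim A_{n-1})$, plug in the combinatorial dimension formulas from the preceding Lemma, sum the resulting geometric series to obtain the closed form, and read off $g$ from the leading asymptotics as $s\to-\infty$. Your version is slightly more explicit about \emph{why} the choices drop out (namely that only the filtration dimensions, not the inner product, enter) and about extracting $g$ via $\tfrac{1}{s}\log(\zeta_{1}^{X}(s)-1)$ or the pole spacing, but this is the same argument with a bit more commentary.
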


\begin{proof}
We will explicitly calculate it. Let $\Re(s) \ll 0$, then\\
\bea
\Tr(|D|^{s}) & = & 1 + \sum_{n \geq 1} \lambda_{n}^{s}(\dim A_{n} -\dim A_{n-1}) \nn \\
& = & 1 + (2g)^{3s}(2g-1) +  \nn\\
& & \sum_{n \geq 2} ((2g)(2g-1)^{n-1})^{3s}\cdot (2g)(2g-1)^{n-2}(2g-2)  \nn\\
& = & 1 + (2g)^{3s}(2g-1)\Bigl\{ \frac{1-(2g-1)^{3s-1}}{1-(2g-1)^{3s+1}} \Bigr\} \label{zeta1}.
\eea
The first order expansion around $s=-\infty$ is
\bea
1+(2g)^{3s}(2g-1).
\eea
So the formula determines $g$ and on the other hand $g$ determines $\zeta_{1}$ by the formula above.
\end{proof}

\begin{Prop} \label{meas}
Let $X_{1},X_{2}$ be graphs with the same genus $g \geq 2$. If $\zeta_{a}^{X_{1}}=\zeta_{a}^{X_{2}}$ for all $a \in A_{\infty}$, then the induced measures $\Phi^{*}_{X_{i}}(\mu_{i})$ on $\partial F_{g}$ are equal (with these specific choices of origin and representation). \\
More precise, $\lim_{s \to -\infty} \zeta_{a}^{X_{i}}(s)$ equals $\int_{\partial F_{g}} a \,  \ud (\Phi^{*}_{X_{i}}(\mu_{i}))$.
\end{Prop}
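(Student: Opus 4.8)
The plan is to evaluate $\lim_{s\to-\infty}\zeta_a^{X}(s)$ directly from the series expansion $\zeta_a^X(s)=\sum_{n\ge 0}\lambda_n^s c_n(a)$ recorded above, to show that it equals the integral of $a$ against the pulled-back measure, and then to recover equality of the two measures from the Riesz representation theorem applied to the dense subalgebra $A_\infty$.

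\textbf{Step 1 (only the $n=0$ term survives).} Since $\dim A_0=1$ we have $\lambda_0=(\dim A_0)^3=1$, whereas for $n\ge 1$ one has $\dim A_n\ge\dim A_1=2g\ge 4$, so $\lambda_n=(\dim A_n)^3>1$. Thus $\lambda_0^s\equiv 1$ for all $s$, while $\lambda_n^s\to 0$ as the real variable $s\to-\infty$ for each fixed $n\ge 1$. To move the limit inside the sum I fix the exponent $s_0=-1$ and use the elementary bound $|c_n(a)|\le\|a\|_{C(\partial F_g)}\,(\dim A_n-\dim A_{n-1})\le\|a\|_{C(\partial F_g)}\dim A_n$, which follows from $|\langle\Psi,a\Psi\rangle|\le\|a\|_{C(\partial F_g)}$ for unit vectors $\Psi$. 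Combining this with $\dim A_n\ge n+1$ and with $\lambda_n\ge 1$ (so that $\lambda_n^{\Re(s)}\le\lambda_n^{-1}=(\dim A_n)^{-3}$ whenever $\Re(s)\le -1$) gives $|\lambda_n^s c_n(a)|\le\|a\|_{C(\partial F_g)}(n+1)^{-2}$, a summable majorant independent of $s$ in the half-plane $\Re(s)\le -1$. In particular the defining series converges absolutely there, and dominated convergence over $\N$ yields $\lim_{s\to-\infty}\zeta_a^X(s)=c_0(a)$. Note the limit is taken along the reals inside the region where the Dirichlet-type series itself converges, so the meromorphic continuation plays no role.

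\textbf{Step 2 (identifying $c_0(a)$).} The space $A_0$ is one-dimensional, spanned by the constant function $\mathbf 1$; since $\Phi_X^*(\mu)$ is a probability measure we have $\|\mathbf 1\|_H^2=\int_{\partial F_g}1\,\ud(\Phi_X^*\mu)=1$, so $\mathbf 1$ is already a unit vector and we may take $I_0=\{\mathbf 1\}$. Because $A$ acts on $H$ by multiplication, $a\,\mathbf 1=a$, and hence $c_0(a)=\langle\mathbf 1,a\,\mathbf 1\rangle_H=\int_{\partial F_g}a\,\ud(\Phi_X^*\mu)$. Together with Step 1 this proves the ``more precise'' assertion for each $X_i$.

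\textbf{Step 3 (conclusion) and main obstacle.} If $\zeta_a^{X_1}=\zeta_a^{X_2}$ for every $a\in A_\infty$, then letting $s\to-\infty$ and invoking Steps 1--2 gives $\int_{\partial F_g}a\,\ud(\Phi_{X_1}^*\mu_1)=\int_{\partial F_g}a\,\ud(\Phi_{X_2}^*\mu_2)$ for all $a\in A_\infty$. Each $\Phi_{X_i}^*(\mu_i)$ is a finite positive Borel measure on $\partial F_g$ --- hence Radon, $\partial F_g$ being compact and metrizable --- and $a\mapsto\int a\,\ud(\Phi_{X_i}^*\mu_i)$ is a bounded linear functional on $C(\partial F_g,\C)$; since $A_\infty$ is dense in $C(\partial F_g,\C)$, the two functionals agree on all of $C(\partial F_g,\C)$, so the uniqueness part of the Riesz representation theorem forces $\Phi_{X_1}^*(\mu_1)=\Phi_{X_2}^*(\mu_2)$. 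The only genuinely technical point is the interchange of the limit $s\to-\infty$ with the infinite sum in Step 1, i.e.\ producing the uniform summable majorant; the remainder is bookkeeping together with the standard fact that a finite Borel measure on a compact metric space is determined by its integrals against a dense subspace of continuous functions.
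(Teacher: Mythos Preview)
Your argument is correct and is in fact cleaner than the paper's own proof. Both proofs identify $\lim_{s\to-\infty}\zeta_a^X(s)$ with the coefficient $c_0(a)$ and then compute $c_0(a)=\int a\,\ud(\Phi_X^*\mu)$, but the routes to this last identity differ. You observe directly that $I_0=\{\mathbf 1\}$, so $c_0(a)=\langle\mathbf 1,a\mathbf 1\rangle=\int a\,\ud\nu$, and then pass from $A_\infty$ to all of $C(\partial F_g)$ via Riesz. The paper instead specializes to $a=1_P$, computes the constant term as $\Tr(1_P P_0)$ using the \emph{non-adapted} orthogonal basis $\{1_Q/\|1_Q\|:\,l(Q)=l(P)\}$ of $A_{|P|}$, obtains $P_0(1_P)=\nu(U_P)$, and then checks separately that refining this basis does not change the partial trace. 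Your choice of basis avoids the refinement argument entirely; the paper's basis makes the geometric content ($c_0(1_P)=\nu(U_P)$) explicit at the cost of the extra step. You also supply the dominated-convergence justification for interchanging the limit $s\to-\infty$ with the series, which the paper leaves implicit.
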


\begin{proof}
It suffices to show that the zeroth term in the expansion of $\zeta_{a}^{X}$ is given as in the theorem as this equals $\lim_{s \to -\infty} \zeta_{a}^{X}(s)$.\\
For this it suffices to show the equality for a basis, i.e. it suffices to show that $\nu(U_{P})=\int_{\partial F_{g}} 1_{P} d\nu$ for all $P$, here $\nu$ is one of the induced measures and $P$ is a finite path starting in $O$ as before. \\
Fix $P$ and take the canonical orthogonal basis $B_{P}$ for $A_{|P|}$, i.e.
\bea
B_{P}=\{ \frac{1_{Q}}{||1_{Q}||} \mid \text{with }l(Q)=l(P)\}.
\eea
The constant term of the zeta function corresponds to $\Tr(a P_{0})$.
First we will trace only over $A_{|P|}$ and get the result. Then we prove that a refinement of the basis does not change the trace, proving the theorem.\\ 
We have
\bea
\Tr_{A_{|P|}}(a P_{0}) & = & \sum_{w \in B_{P}} \langle w | 1_{P} P_{0} | w \rangle  \nn\\
& = & \frac{\int_{\partial F_{g}} 1_{P} P_{0}(1_{P})\, \ud \nu}{\int_{\partial F_{g}} 1_{P} 1_{P} \, \ud \nu}  \nn\\
& = & P_{0}(1_{P})\frac{\nu(U_{P})}{\nu(U_{P})}  \nn\\
& = & P_{0}(1_{P}),
\eea
here we used that $P_{0}(1_{P})$ is a constant function and by abuse of notation we denoted its value by $P_{0}(1_{P}) \in \mathbb{C}$ as well.
The projection is characterized by
\bea
1_{P}-P_{0}(1_{P}) \perp A_{0}
\eea
and because $A_{0} \cong \mathbb{C}$, this is equivalent with
\bea
\int_{\partial F_{g}} (1_{P}-P_{0}(1_{P})) \, \ud \nu=0
\eea
and so $P_{0}(1_{P})=\nu(U_{P})$, proving the first assertion. \\
Now let $B' \supset B$ be an orthogonal extension of the basis $B$, in particular we have for $v \in B'\setminus B$, $\int 1_{P} \cdot v \ud \mu= 0$, so
\bea
\Tr_{B'}(1_{P}P_{0}) & = & \Tr_{B}(1_{P}P_{0}) + \Tr_{B' \setminus B}(1_{P}P_{0})  \nn \\
& = & \mu(U_{P})+\sum_{v \in B'} \int_{\partial F_{g}} v \cdot 1_{P} \cdot P_{0}(v) \, \ud \nu  \nn \\
& = & \mu(U_{P})+\sum_{v \in B'} P_{0}(v)\int_{\partial F_{g}} v \cdot 1_{P} \, \ud \nu  \nn \\
& = & \mu(U_{P})+\sum_{v \in B'} P_{0}(v) \cdot 0   \nn \\
& = & \mu(U_{P}),
\eea
proving the proposition.
\end{proof}

\begin{Rem}
The computation of the corresponding term in \cite{cor} is wrong and should be replaced by a calculation similar to the above. This only affects the proofs, not the results, of \cite{cor}.
\end{Rem}

\section{The main theorem}
\begin{Theo}[Main theorem] \label{czet}
Let $X_{1},X_{2}$ be finite, connected graphs of genus $g \geq 2$ and valencies $\geq 3$.
Then either
\bea
\ZXo \cap \ZXt = \emptyset,
\eea
or
\bea
\ZXo = \ZXt \text{ and } X_{1} \cong X_{2} \text{ as graphs}.
\eea
Here the intersection is defined as follows:\\
One starts by comparing at the unit of the algebras, i.e. $\zeta_{1}^{X_{i}}$: If these are different, the intersection is defined to be $\emptyset$; If they are the same, the genus $g$ is the same by Proposition \ref{zetaone}. Now consider $\partial F_{g}$ and fix the algebra $A_{\infty}$. The infinitely long rows $(\zeta_{a}^{X_{i}})_{a \in A_{\infty}} \in \zeta[X_{i}]$ are now indexed by the same algebra and we can compare elements $(\zeta_{a}^{X_{i}})_{a \in A_{\infty}} \in \zeta[X_{i}]$.
\end{Theo}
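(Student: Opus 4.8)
The plan is to dispose of the trivial case (empty intersection, where there is nothing to prove) and, assuming $\ZXo\cap\ZXt\neq\emptyset$, to deduce first $X_{1}\cong X_{2}$ and then $\ZXo=\ZXt$. By the way the intersection is defined, non-emptiness already forces $\zeta_{1}^{X_{1}}=\zeta_{1}^{X_{2}}$ --- these functions depend on no choices --- so Proposition~\ref{zetaone} gives that the two genera agree, say both equal $g$. Hence the algebras $A_{\infty}$, the subspaces $A_{n}$ and the numbers $\lambda_{n}=(\dim A_{n})^{3}$ are literally the same for both graphs, and an element of the intersection amounts to data $(O_{1},\alpha_{1})$ for $X_{1}$ and $(O_{2},\alpha_{2})$ for $X_{2}$ with $\zeta^{X_{1},O_{1},\alpha_{1}}_{a}=\zeta^{X_{2},O_{2},\alpha_{2}}_{a}$ for every $a\in A_{\infty}$.

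The first step is to extract a boundary identification. By Proposition~\ref{meas}, equality of all these zeta functions forces the induced measures on $\bdf$ to coincide: $\nu:=\Phi_{X_{1}}^{*}(\mu_{1})=\Phi_{X_{2}}^{*}(\mu_{2})$. I would then consider $\psi:=\Phi_{X_{2}}\circ\Phi_{X_{1}}^{-1}\colon\partial X_{1}\to\partial X_{2}$. Each $\Phi_{X_{i}}$ is a homeomorphism by Proposition~\ref{giso}, so $\psi$ is one; since each $\Phi_{X_{i}}$ is a homeomorphism, $\mu_{i}=(\Phi_{X_{i}})_{*}\nu$, hence $\psi_{*}\mu_{1}=\mu_{2}$ and in particular $\psi$ is non-singular; and since each $\Phi_{X_{i}}$ intertwines the canonical action on $\bdf$ with the $\alpha_{i}(\Gamma)$-action on $\partial X_{i}$, the map $\psi$ intertwines the two boundary actions for a suitable isomorphism of the acting groups, i.e.\ is equivariant in the sense of Definition~\ref{equiv}. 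Feeding $\psi$ into Proposition~\ref{rigid} then yields $X_{1}\cong X_{2}$ as graphs. This appeal to the boundary-rigidity statement is the conceptual heart of the argument; within the present proof the point demanding care is verifying that $\psi$ is simultaneously a homeomorphism, non-singular for the two Patterson-Sullivan measures, and equivariant for the correct identification of the two representations of $\Gamma$ as isometry groups.

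It remains to upgrade $X_{1}\cong X_{2}$ to $\ZXo=\ZXt$, which I would do by transporting data along a graph isomorphism $f\colon X_{1}\to X_{2}$. Such an $f$ lifts to an isometry $\widetilde f$ of the universal covering trees conjugating the two $\Gamma$-actions; it carries an origin $O$ to $\widetilde f(O)$, a minimal generating set to its image, and --- the Patterson-Sullivan measure (Definition~\ref{patter}) being built only from the Poincar\'e series, hence isometry-invariant --- it carries $\mu_{\mathrm{PS},O}$ to $\mu_{\mathrm{PS},\widetilde f(O)}$ and is compatible with the canonical maps $\Phi$. Thus any datum $(O,\alpha)$ for $X_{1}$ has a transported datum for $X_{2}$ inducing the \emph{same} measure $\nu$ on $\bdf$. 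Finally the whole row $(\zeta_{a}^{X})_{a\in A_{\infty}}$ depends on the chosen datum only through $g$ and this measure: $\lambda_{n}$ depends only on $g$, while the coefficient of $\lambda_{n}^{s}$, namely $c_{n}(a)=\Tr(Q_{n}\,a\,Q_{n})$, is computed from the inner product on $A_{\infty}$, which is integration against $\nu$. Hence the transported datum gives exactly the same row, so $\ZXo\subseteq\ZXt$; running the same argument with $f^{-1}$ gives the reverse inclusion, and therefore $\ZXo=\ZXt$.
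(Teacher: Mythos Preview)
Your proposal is correct and follows essentially the same route as the paper: from a common element of $\ZXo\cap\ZXt$ you use Proposition~\ref{meas} to identify the induced measures on $\bdf$, push this through $\psi=\Phi_{X_{2}}\circ\Phi_{X_{1}}^{-1}$ to obtain a non-singular equivariant boundary map, and invoke Proposition~\ref{rigid}. The only notable difference is that you spell out the step ``$X_{1}\cong X_{2}\Rightarrow\ZXo=\ZXt$'' (transporting origins, representations, and the Patterson--Sullivan measure along a lifted graph isomorphism, and observing that the row $(\zeta_{a})_{a}$ depends only on $g$ and the induced measure on $\bdf$), whereas the paper simply asserts that isomorphic graphs give rise to the same set $\zeta[X]$.
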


Before proving this theorem we will need some definitions and results.
\begin{Defi}[Cross-ratio, M\"obius]
For $\xi_{1},\ldots,\xi_{4}$ four distinct points on the boundary $\bd$, define the cross-ratio as:
\bea
b(\xi_{1},\xi_{2},\xi_{3},\xi_{4}):=\frac{d(\xi_{3},\xi_{1})}{d(\xi_{3},\xi_{2})}:\frac{d(\xi_{4},\xi_{1})}{d(\xi_{4},\xi_{2})}
\eea
This can be written as $\exp(L)$ with $L$ (up to sign) the distance between the geodesics $]\xi_{1},\xi_{3}[$ and $]\xi_{2},\xi_{4}[$, see figure \ref{fig:circulator:2}.\\
A function $f$ preserving the cross-ratio, i.e. such that for all distinct points $\xi_{1},\ldots,\xi_{4}$ we have
\[
b(\xi_{1},\xi_{2},\xi_{3},\xi_{4})=b(f(\xi_{1}),f(\xi_{2}),f(\xi_{3}),f(\xi_{4})),
\]
is called \emph{M\"obius} (see \cite{her},\cite{co1}).
\end{Defi}

\begin{figure}[htbp]
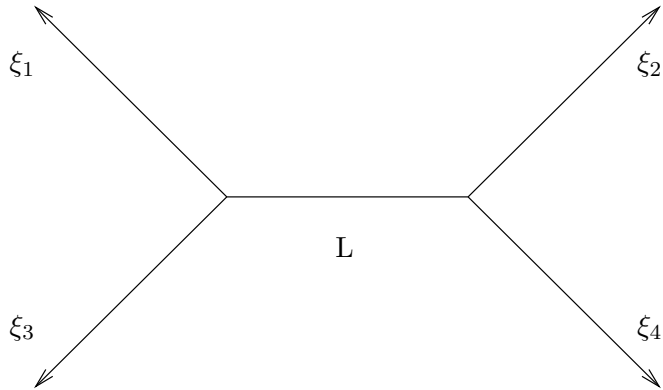

  \begin{center}
   \input cross_t 
    \caption{The cross-ratio.}
    \label{fig:circulator:2}
   \end{center}
\end{figure}

\begin{Lemm}
Let $\tilde{\phi}: \p X_{1} \to \p X_{2}$ be an (equivariant) isomorphism which is M\"obius, then $\tilde{\phi}$ induces an (equivariant) isomorphism of trees $F:E_{X_{1}} \to E_{X_{2}}$ and hence an isomorphism of graphs $X_{1} \cong X_{2}$.
\end{Lemm}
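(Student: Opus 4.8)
The plan is to reconstruct the trees $E_{X_i}$ from their boundaries together with the cross-ratio, exploiting the well-known fact that for a tree the boundary is much more rigid than for a general hyperbolic space: the distance function $d_{X_i,O_i}$ and the combinatorial structure of $E_{X_i}$ can essentially be read off from the clopen sets $U_P$ and the cross-ratios among boundary points. First I would recall that a point of $E_{X_i}$ (other than the origin) is determined by the nested family of clopen sets $U_P$ containing a given boundary ray, and more intrinsically that the confluence / Gromov product $(\xi_1 \mid \xi_2)_O$ of two boundary points — which measures the length of the common initial segment $[O,\xi_1) \cap [O,\xi_2)$ — can be expressed through the cross-ratio once one fixes a reference point $\xi_0$ playing the role of $O$ on the boundary. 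Concretely, for a tree the quantity $\exp\big((\xi_1\mid\xi_2)_O - (\xi_1\mid\xi_2)_{O'}\big)$ appears inside $b(\xi_1,\xi_2,\xi_3,\xi_4)$, so a M\"obius bijection $\tilde\phi$ preserves all these confluence data up to a global additive constant (a choice of basepoint on the target side), i.e. $\tilde\phi$ is, after adjusting the origin $O_2$, an isometry for the visual metrics $d_{X_1,O_1}$ and $d_{X_2,O_2}$.

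Next I would turn this metric statement on $\partial X_i$ into a map of trees. Since every vertex $V \in E_{X_i}$ is the endpoint of a finite non-backtracking path $P$ from $O_i$, and since $U_P$ is exactly the set of boundary points whose ray through $O_i$ begins with $P$, the vertices of $E_{X_i}$ are in bijection with the clopen sets of the form $U_P$; the edge relation and the distance $l$ are recovered from the inclusion order and the "depth'' (the exponent $N$ in radius $2^{-N}$), which is itself determined by the metric on $\partial X_i$ via $\diameter(U_P)$. Because $\tilde\phi$ carries the visual metric on $\partial X_1$ to that on $\partial X_2$, it sends each $U_P$ to some $U_{P'}$ of the same depth and respects inclusions, hence induces a bijection $F$ on vertices preserving adjacency and $l$ — that is, an isomorphism of metric trees $F:E_{X_1}\to E_{X_2}$. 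If $\tilde\phi$ is equivariant with respect to the $\Gamma$-actions (via the homeomorphisms $\Phi_{X_i}$ of Proposition~\ref{giso} and some group isomorphism $\Gamma_1 \cong \Gamma_2$), then $F$ is equivariant too, because equivariance of $\tilde\phi$ forces $F$ to intertwine the deck actions on the $U_P$'s.

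Finally, passing to quotients gives the graph isomorphism: $X_i \cong E_{X_i}/\Gamma_i$, and an equivariant isometry $F:E_{X_1}\to E_{X_2}$ descends to an isomorphism $X_1\cong X_2$ of the quotient graphs. (Even without equivariance, a tree isomorphism $F$ matching the two covering trees, together with the constraint that both cover finite graphs of the same genus and valencies $\geq 3$, pins down the finite quotient, but the equivariant version is cleanest.)

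The main obstacle is the first step: showing that "M\"obius'' genuinely upgrades to "visual-metric isometry after a basepoint change.'' One has to handle the choice of origin carefully — $d_{X_i,O}$ depends on $O$, whereas the cross-ratio does not — and verify that a M\"obius bijection of boundaries of trees, as opposed to boundaries of general CAT($-1$) spaces where extra subtleties (Otal-type rigidity, the role of the Gromov product cocycle) intervene, really does pick out a point in the target that makes $\tilde\phi$ metric-preserving; this is where I would lean on \cite{her} and \cite{co1}. Once that is in hand, the reconstruction of the tree from its boundary-with-metric is essentially bookkeeping with the sets $U_P$ and is not where the difficulty lies.
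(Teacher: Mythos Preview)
Your plan is workable but takes a different route from the paper's. The paper does not pass through visual metrics or the sets $U_P$ at all; instead it reconstructs $F$ directly and basepoint-freely via a \emph{tripod} construction: for a vertex $x\in E_{X_1}$, choose $\xi_1,\xi_2,\xi_3\in\partial X_1$ with $x$ the center of the tripod $]\xi_1,\xi_2[\cup]\xi_1,\xi_3[\cup]\xi_2,\xi_3[$, and set $F(x)$ to be the tripod center of $\tilde\phi(\xi_1),\tilde\phi(\xi_2),\tilde\phi(\xi_3)$. Well-definedness is exactly the M\"obius condition: replacing $\xi_3$ by $\xi_4$ leaves the center unchanged iff the geodesics $]\xi_1,\xi_2[$ and $]\xi_3,\xi_4[$ meet, i.e.\ the cross-ratio has $L=0$, and this is preserved by $\tilde\phi$. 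That $F$ carries edges to edges is then read off from the cross-ratio interpretation $b=\exp(L)$ again. So the very step you single out as the obstacle --- upgrading ``M\"obius'' to ``isometry of visual metrics after choosing $O_2$'' --- is bypassed entirely: the paper never needs to locate $O_2$, because the tripod map is intrinsic. Your approach buys a more metric picture (balls $U_P$, diameters, depths), at the cost of having to produce $O_2$ first; and if you trace through how one would actually find $O_2$, you will likely end up applying the tripod construction to the single vertex $O_1$ anyway.

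One genuine error to flag: your parenthetical claim that ``even without equivariance, a tree isomorphism $F$ \ldots\ pins down the finite quotient'' is false. Any two $d$-regular finite graphs with $d\geq 3$ share the $d$-regular tree as universal cover, yet need not be isomorphic; more generally there are many non-isomorphic graphs of the same genus and valencies $\geq 3$ with isometric covering trees. Equivariance is not merely the cleanest route but is essential for descending $F$ to an isomorphism $X_1\cong X_2$. Drop that remark, and keep the equivariant argument as the actual proof.
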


\begin{proof}
A similar construction is used in \cite{co1}.\\
We define a map $F$ as follows:\\
Let $x \in V(E_{X_{1}})$, a vertex in the covering graph. Pick $\xi_{1},\xi_{2},\xi_{3} \in \p X_{1}$ such that $x$ is the center of the tripod induced by these (see figure \ref{fig:tripod}). Define $F(x) \in V(E_{X_{2}})$ as the unique center of the tripod of $\tilde{\phi}(\xi_{1}),\tilde{\phi}(\xi_{2}),\tilde{\phi}(\xi_{3}) \in \p X_{2}$. To show that this is well-defined, if $x$ is also the center of $\xi_{1},\xi_{2},\xi_{4}$, then this is the same as saying that the geodesics $]\xi_{1},\xi_{2}[$ and $]\xi_{3},\xi_{4}[$ are touching, i.e. $L=0$, this is preserved because $\tilde{\phi}$ is M\"obius and hence the map is well-defined. The map is an isomorphism on the vertex sets, because the construction can be reversed.\\
To see that this extends to a map of covering trees, let $e$ be an edge of length $L_{1}$, then the initial and terminal vertex of $e$ are mapped to two distinct vertices in $V(E_{X_{2}})$. Connect these by a path $P$ of length $L=L_{1}$ (because the map is M\"obius it is $L_{1}$ again). We have to show that $P$ is an edge again. If not there is a new vertex $e'$ on $P$, this point is mapped by the inverse to a point on distance on distance $K,K'<L$ from the initial and terminal vertex of $e$ which is impossible, hence $F$ is an isomorphism of graphs.\\
By well-definedness, if $\tilde{\phi}$ is equivariant for some group-action, then so is $F$.
\end{proof}

\begin{figure}[htbp]
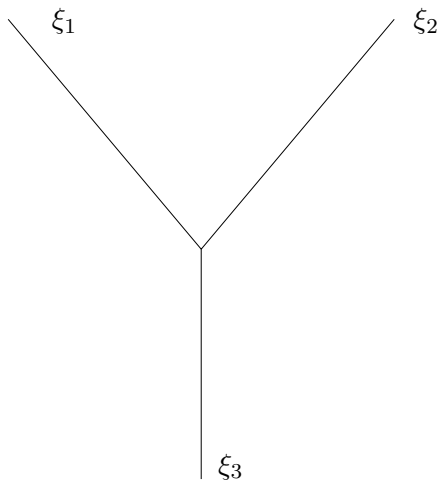

  \begin{center}
    \input tripod_t 
    \caption{The tripod.}
    \label{fig:tripod}
   \end{center}
 \end{figure}

In the proof of the main theorem we will use the following theorem (see \cite{her}, Theorem A):\\
\begin{Theo}
Let $Y_{1},Y_{2}$ be locally compact complete $\text{CAT}(-1)$ metric spaces.
Let $G_{1}$ and $G_{2}$ be discrete groups of isometries of $Y_{1}$ and $Y_{2}$, having the same critical exponent.
Suppose that $G_{2}$ is a divergence group. Let $\tilde{\phi}: \p Y_{1} \to \p Y_{2}$ be a Borel map, equivariant for some morphism $G_{1} \to G_{2}$, which is non-singular with respect to the Paterson-Sullivan measures.
Then $\tilde{\phi}$ is M\"obius on the limit set of $G_{1}$.
\end{Theo}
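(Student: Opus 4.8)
The plan is to reduce the M\"obius property to a cross-ratio identity holding $\mu_1$-almost surely, and then to promote it to an identity on the whole limit set $\Lambda(G_1)$ by continuity. Recall that for a complete $\text{CAT}(-1)$ space $Y$ with base point $O$ the visual metric on $\p Y$ is $d_O(\xi,\eta)=e^{-(\xi\mid\eta)_O}$, where $(\xi\mid\eta)_O$ is the Gromov product, and the cross-ratio $b(\xi_1,\xi_2,\xi_3,\xi_4)$ of the Definition above is independent of $O$; a boundary map is M\"obius exactly when it preserves $b$ on every quadruple of distinct points. Since $\mu_1$ is non-atomic with $\supp\mu_1=\Lambda(G_1)$ and the Gromov product is continuous where finite, it therefore suffices to prove that $\tilde{\phi}$ preserves $b$ on a set of quadruples of full $\mu_1^{\otimes 4}$-measure; the resulting identity on a dense subset of $\Lambda(G_1)$ then extends to a M\"obius map on $\Lambda(G_1)$ agreeing with $\tilde{\phi}$ almost everywhere (and everywhere when $\tilde{\phi}$ is continuous, as in the application).

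First I would invoke the conformal-density formalism. The Patterson-Sullivan measures $\mu_i$ on $\p Y_i$ are $\delta$-conformal densities, $\delta$ being the common critical exponent: $\tfrac{dg_*\mu_i}{d\mu_i}(\xi)=e^{-\delta\,\beta_\xi(gO_i,O_i)}$ for $g\in G_i$, and more generally $\tfrac{d\mu_{i,O'}}{d\mu_{i,O}}(\xi)=e^{-\delta\,\beta_\xi(O',O)}$ for two base points (Sullivan's shadow lemma together with the Busemann cocycle). Non-singularity of $\tilde{\phi}$ provides a positive measurable $D=\tfrac{d\,\tilde{\phi}^{*}\mu_2}{d\mu_1}$ on $\p Y_1$, and equivariance $\tilde{\phi}\circ g=\alpha(g)\circ\tilde{\phi}$ combined with $\delta$-conformality of both $\mu_1$ and $\mu_2$ yields, for every $g\in G_1$ and $\mu_1$-a.e.\ $\xi$, the cocycle relation
\[
\log D(g\xi)-\log D(\xi)=\delta\big(\beta^{(1)}_{\xi}(gO_1,O_1)-\beta^{(2)}_{\tilde{\phi}(\xi)}(\alpha(g)O_2,O_2)\big).
\]
Equivalently, $\tilde{\phi}^{*}\mu_2$ is again a $\delta$-conformal density on $\p Y_1$ for the family of Busemann cocycles transported from $Y_2$. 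The divergence hypothesis on $G_2$ enters here: by the Hopf--Tsuji--Sullivan dichotomy (Sullivan, Roblin) the $G_2$-action on $\p^{2}Y_2:=(\p Y_2\times\p Y_2)\setminus\mathrm{diag}$, equipped with the Bowen--Margulis measure $d\lambda_2(\xi,\eta)=d_{O_2}(\xi,\eta)^{-2\delta}\,d\mu_2(\xi)\,d\mu_2(\eta)$ --- which is $G_2$-invariant and base-point independent --- is conservative and ergodic.

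The heart of the argument is to exploit this across $\tilde{\phi}$. On $\p^{2}Y_1$ one has the analogous $G_1$-invariant measure $d\lambda_1(\xi,\eta)=d_{O_1}(\xi,\eta)^{-2\delta}\,d\mu_1(\xi)\,d\mu_1(\eta)$, and $\tilde{\phi}\times\tilde{\phi}$ is a non-singular, $\alpha$-equivariant map $(\p^{2}Y_1,\lambda_1)\to(\p^{2}Y_2,\lambda_2)$. Introduce the M\"obius-defect function
\[
\Psi(\xi,\eta):=\log D(\xi)+\log D(\eta)+2\delta\,\log\frac{d_{O_2}(\tilde{\phi}\xi,\tilde{\phi}\eta)}{d_{O_1}(\xi,\eta)}.
\]
A direct computation from the cocycle relation above, using how the visual metrics transform under isometries, shows that $\Psi$ is invariant under the diagonal $G_1$-action. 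One then transports the conservativity and ergodicity of $(\p^{2}Y_2,\lambda_2)$ back along the non-singular equivariant map $\tilde{\phi}\times\tilde{\phi}$ to conclude that $\Psi$ is $\lambda_1$-a.e.\ equal to a constant $\kappa$ on $\Lambda(G_1)^{2}$. Finally, substituting $d_{O_2}(\tilde{\phi}\xi,\tilde{\phi}\eta)=d_{O_1}(\xi,\eta)\,e^{\kappa/(2\delta)}\,D(\xi)^{-1/(2\delta)}\,D(\eta)^{-1/(2\delta)}$ into $b(\tilde{\phi}\xi_1,\tilde{\phi}\xi_2,\tilde{\phi}\xi_3,\tilde{\phi}\xi_4)$, the $\kappa$- and $D$-factors cancel telescopically, so $b(\tilde{\phi}\xi_\bullet)=b(\xi_\bullet)$ for $\mu_1^{\otimes 4}$-a.e.\ quadruple; by the reduction of the first paragraph, $\tilde{\phi}$ is M\"obius on $\Lambda(G_1)$.

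I expect the main obstacle to be transporting conservativity and ergodicity from $G_2$ to the statement ``$\Psi=\kappa$ a.e.\ on $\Lambda(G_1)^{2}$'': the $G_1$-action on $(\p^{2}Y_1,\lambda_1)$ need not itself be conservative --- we do not assume $G_1$ is a divergence group, which is exactly why the conclusion concerns $\Lambda(G_1)$ --- so one must run the Hopf--Tsuji--Sullivan dichotomy for $G_1$, isolate its conservative ergodic part, and show that non-singularity of $\tilde{\phi}\times\tilde{\phi}$ over the conservative $G_2$-space forces $\lambda_1$-almost every point of $\Lambda(G_1)^{2}$ into that part, so that $\Psi$ descends to the resulting ergodic component; one must also keep track of the fact that $\alpha:G_1\to G_2$ may have non-trivial kernel, so the transported structure is only $\alpha(G_1)$-equivariant, and of the essential injectivity of $\tilde{\phi}$ on the limit sets needed for this to work. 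The remaining ingredients --- $\delta$-conformality of Patterson-Sullivan measures, the shadow lemma, continuity of Gromov products on $\text{CAT}(-1)$ boundaries, and the formal cancellation in the last step --- are standard.
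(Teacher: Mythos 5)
This theorem is not proved in the paper at all: it is imported verbatim from \cite{her} (Theorem A there) and used as a black box in the proof of the main theorem, so there is no in-paper argument to measure your proposal against. That said, your outline reproduces the strategy of the cited source: $\delta$-conformality of the Patterson--Sullivan densities, the cocycle identity for $\log D$, the Bowen--Margulis-type measure $d\lambda(\xi,\eta)=d_{O}(\xi,\eta)^{-2\delta}\,d\mu(\xi)\,d\mu(\eta)$ on the double boundary, a diagonal-$G_{1}$-invariant M\"obius-defect function $\Psi$, and the telescoping cancellation that converts $\Psi\equiv\kappa$ into preservation of the cross-ratio. The invariance of $\Psi$ and the final cancellation are correct as you set them up.

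The genuine gap is the one you flag yourself and then leave unresolved, and it is not a technicality but the heart of the proof. To conclude $\Psi\equiv\kappa$ you need ergodicity of the diagonal $G_{1}$-action on $(\Lambda(G_{1})^{2},\lambda_{1})$, which by Hopf--Tsuji--Sullivan is equivalent to $G_{1}$ being of divergence type; but divergence is only assumed for $G_{2}$. ``Transporting conservativity and ergodicity back along $\tilde{\phi}\times\tilde{\phi}$'' is not a formal operation: a non-singular equivariant Borel map need be neither injective nor essentially surjective, $\alpha(G_{1})$ may be a proper subgroup of $G_{2}$ (so even on the $Y_{2}$ side one only gets $\alpha(G_{1})$-invariance, about which $G_{2}$-ergodicity says nothing), and pulling ergodicity backwards through a measurable map going the wrong way is exactly the kind of step that fails in general. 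In the source this is where the real work lies --- shadow-lemma and Lebesgue-density arguments carried out on the $Y_{2}$ side, which is precisely where the equal-critical-exponent and divergence hypotheses bite --- and without supplying that argument your proof does not close. A smaller point: for a map that is merely Borel, the almost-everywhere cross-ratio identity only yields that $\tilde{\phi}$ agrees $\mu_{1}$-a.e.\ with a M\"obius map on $\Lambda(G_{1})$; upgrading to ``M\"obius everywhere on the limit set'' uses continuity of $\tilde{\phi}$, which holds in the paper's application but is not among the hypotheses of the statement as quoted.
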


\begin{Tex} \label{check}
Let us apply this theorem for $Y_{1}=E_{X_{1}}$, $Y_{2}=E_{X_{2}}$, the covering trees of the graphs $X_{1}$ and $X_{2}$ respectively, having the same genus $g \geq 2$ with $G_{1}= \Gamma = G_{2}$, the fundamental groups of $X_{1}$ and $X_{2}$.\\
The trees under consideration are locally compact and are $\text{CAT}(k)$ metric spaces for any $k$ (see for instance \cite{brid}) and the fundamental group acts as isometries.\\
To construct an equivariant map $\tilde{\phi}$, note that by Proposition \ref{giso} we have an equivariant homeomorphism $\Phi_{X_{i}}: \p F_{g} \to \p X_{i}$, so $\tilde{\phi} = \Phi_{X_{2}} \circ \Phi_{X_{i}}^{-1}: \partial X_{1} \to \partial X_{2}$ is an equivariant homeomorphism.\\
The group $\Gamma$ acts as divergence group on $E_{X_{i}}$. This is because the Poincar\'e series is bounded by the one of the covering tree of $F_{g}$ (because the group acts free and $F_{g}$ is a rectract of $X$) and the graph $F_{g}$ has $\delta = \frac{log(2g-1)}{l}$ as critical exponent, as one easily computes.  Furthermore, $\delta>0$ because the fundamental group is infinite. So we have proven that $\Gamma_{i}$ acts as divergence group on $X_{i}$.\\
Note that if we scale the metric of the covering tree by $\lambda$, the critical exponent scales by $\lambda^{-1}$, so by rescaling we can assume that the critical exponents are the same. Combining this with the previous lemma, we get the following proposition:
\end{Tex}

\begin{Prop} \label{rigid}
Let $X_{1}, X_{2}$ be graphs of genus $g \geq 2$ with covering trees $E_{X_{1}}, E_{X_{2}}$. If $\tilde{\phi} = \Phi_{X_{2}} \circ \Phi_{X_{i}}^{-1}: \p X_{1} \to \p X_{2}$ is non-singular with respect to the Patterson-Sullivan measures, then $\tilde{\phi}$ induces an equivariant isomorphism of $E_{X_{1}} \to E_{X_{2}}$ and hence an isomorphism of $X_{1} \to X_{2}$.
\end{Prop}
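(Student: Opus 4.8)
The plan is to assemble the ingredients already collected in the discussion \ref{check} together with the two lemmas above; no new hard input is required. The heart of the matter is to upgrade the mere homeomorphism $\tilde\phi=\Phi_{X_2}\circ\Phi_{X_1}^{-1}:\p X_1\to\p X_2$ to a M\"obius map, after which the Lemma stating that an equivariant M\"obius boundary isomorphism induces an equivariant tree isomorphism takes over.

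First I would check the hypotheses of the cited Theorem A of \cite{her} with $Y_1=E_{X_1}$, $Y_2=E_{X_2}$, and $G_i=\Gamma$ acting by its isometric action on $E_{X_i}$. The covering trees are locally compact complete $\text{CAT}(-1)$ spaces, and $\Gamma$ acts on each freely and properly discontinuously (being a fundamental-group action), so $G_1$ and $G_2$ are discrete isometry groups; $\Gamma$ is a divergence group on $E_{X_2}$ by the Poincar\'e-series estimate of \ref{check}; and after rescaling the edge length of one of the trees — which does not change the combinatorial graph $X_i$ but scales its critical exponent by a positive constant — the two critical exponents coincide. The map $\tilde\phi$ is a homeomorphism, hence Borel, and by Proposition \ref{giso} it is equivariant for the isomorphism $G_1\to G_2$ identifying the two isometric actions of $\Gamma$. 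By hypothesis $\tilde\phi$ is non-singular for the Patterson-Sullivan measures. Theorem A then yields that $\tilde\phi$ is M\"obius on the limit set of $\Gamma$ acting on $E_{X_1}$.

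Next I would use that $X_1$ is finite, so $\Gamma$ acts cocompactly on the proper space $E_{X_1}$; hence its limit set is all of $\partial X_1$, and $\tilde\phi$ is M\"obius everywhere on $\partial X_1$. Applying the above Lemma to the equivariant M\"obius isomorphism $\tilde\phi:\partial X_1\to\partial X_2$ produces an equivariant isomorphism of trees $F:E_{X_1}\to E_{X_2}$, and passing to the quotients by $\Gamma$ gives the graph isomorphism $X_1\cong X_2$.

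I do not anticipate a serious obstacle here: the argument is bookkeeping around the two cited results. The points that most deserve care are the reduction equalizing the critical exponents (one notes that rescaling the metric by $\lambda$ scales $\delta$ by $\lambda^{-1}$ while leaving $X_i$ untouched) and the identification of the limit set with the full boundary, which is exactly where cocompactness — i.e.\ finiteness of $X_1$ — is used; one should also record the precise morphism $\Gamma\to\Gamma$ for which $\tilde\phi$, and hence $F$, is equivariant, so that the equivariance clauses of Theorem A and of the Lemma apply verbatim.
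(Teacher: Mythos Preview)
Your proposal is correct and follows essentially the same route as the paper: verify the hypotheses of Theorem~A of \cite{her} as in \ref{check} (CAT$(-1)$ trees, divergence group, equalized critical exponents via rescaling, equivariance of $\tilde\phi$), deduce that $\tilde\phi$ is M\"obius, and then invoke the preceding Lemma to obtain the equivariant tree isomorphism and hence the graph isomorphism. Your explicit remark that cocompactness forces the limit set to be all of $\partial X_1$ is a point the paper leaves implicit but is indeed needed.
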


\begin{proof}[Proof of the main theorem, Theorem \ref{czet}]
First, suppose that the intersection defined as above is empty. This implies that the graphs are not isomorphic, because isomorphic graphs give rise to the same set $\zeta[X]$.\\
Suppose now that $\ZXo \cap \ZXt \neq \emptyset$. We must show that the graphs are isomorphic.\\
The intersection is nonempty, so by definition they have the same genus $g$. Suppose that $(\zeta^{X_{i}}_{a})_{a \in A_{\infty}}$ is in the intersection.\\
We have the following commuting diagram
\begin{displaymath}
\begin{diagram}
\node{(F_{g},\Phi_{X_{1}}^{*}(\mu_{1}))} \arrow{e,t}{\text{id}} \node{(F_{g},\Phi_{X_{2}}^{*}(\mu_{2}))}\\
\node{(\p X_{1},\mu_{1})} \arrow{e,t}{\tilde{\phi}} \arrow{n,t}{\Phi_{X_{1}}} \node{(\p X_{2},\mu_{2}).} \arrow{n,t}{\Phi_{X_{2}}}
\end{diagram}
\end{displaymath}
The zeta functions are equal, so we know that $\Phi_{X_{1}}^{*}(\mu_{1})=\Phi_{X_{2}}^{*}(\mu_{2})$ and hence $\mu_{2}=\tilde{\phi}^{*}(\mu_{1})$ and so $\tilde{\phi}$ is non-singular with respect to the Patterson-Sullivan measures and hence the previous proposition applies to the equivariant map $\tilde{\phi}$.
\end{proof}

\begin{Theo} \label{ctrip}
The spectral triple $\mathcal{S}_{X}$ determines the graph $X$.
\end{Theo}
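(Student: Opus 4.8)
The plan is to deduce Theorem \ref{ctrip} directly from the Main Theorem (Theorem \ref{czet}). The assertion is that the assignment $X \mapsto \mathcal{S}_X$ is injective on isomorphism classes of graphs, and this will follow once we observe that the entire family of zeta functions attached to $\mathcal{S}_X$ is already recorded as one of the rows making up the set $\zeta[X]$, after which the disjoint-or-equal dichotomy of Theorem \ref{czet} does the rest.

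First I would check that the data $(A,H,D)$ intrinsically determines the ingredients entering the zeta function formalism. By the Gelfand--Naimark theorem the commutative unital $C^{*}$-algebra $A$ recovers the compact space $\p F_{g}$, and the distinguished dense involutive subalgebra $A_{\infty}$ appearing in Definition \ref{s3} is exactly $C(\p F_{g},\Z)\otimes_{\Z}\C$, the locally constant functions (equivalently, the linear span of the characteristic functions $1_{P}$ of the clopen balls $U_{P}$); it is therefore canonically attached to the triple. Since $|D|$ is determined by $D$, for every $a \in A_{\infty}$ the zeta function $\zeta^{X}_{a}(s) = \Tr_{H}(a|D|^{s})$ is a function canonically associated to $\mathcal{S}_{X}$, meromorphic on $\C$ by the general theory cited above.

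Next I would recall that $\mathcal{S}_{X}$ was built from a choice of origin $O$ and of a representation $\alpha$ of $\Gamma$ with a minimal generating set, and that by Definition \ref{equivzeta} the resulting indexed row $(\zeta^{X,O,\alpha}_{a})_{a\in A_{\infty}}$ is by construction one of the elements of $\zeta[X]$. Conversely, isomorphic graphs yield the same set $\zeta[X]$ (a graph isomorphism carries origins to origins and generating sets to generating sets, hence identifies the two families of triples), so $X \mapsto \zeta[X]$ is a genuine invariant of the isomorphism class of $X$. Now if $\mathcal{S}_{X_{1}}$ and $\mathcal{S}_{X_{2}}$ are the same spectral triple (unitarily equivalent in a way intertwining the $A$-actions and $D$), then reading off the zeta functions gives $\zeta^{X_{1}}_{1}=\zeta^{X_{2}}_{1}$; by Proposition \ref{zetaone} the genera agree, so $\p F_{g}$ and $A_{\infty}$ are identified, the two rows $(\zeta^{X_{i}}_{a})_{a\in A_{\infty}}$ coincide, hence $\ZXo\cap\ZXt\neq\emptyset$, and Theorem \ref{czet} forces $X_{1}\cong X_{2}$.

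The point that needs care is not any computation but the bookkeeping around the word ``determines'': one must fix once and for all the equivalence relation on spectral triples under which the statement is being asserted, and then verify that it is fine enough that equivalent triples produce intersecting $\zeta[\cdot]$-sets while still coarse enough that isomorphic graphs produce equivalent triples (so that $X\mapsto\mathcal{S}_{X}$ is well defined before one asks whether it is injective). Once this is pinned down, every remaining step is either the Gelfand--Naimark identification of $\p F_{g}$ and $A_{\infty}$, the explicit formula \eqref{zeta1} of Proposition \ref{zetaone}, or a direct appeal to Theorem \ref{czet}, and I expect no further obstacle.
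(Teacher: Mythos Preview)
Your proposal is correct and follows exactly the paper's own route: the spectral triple determines the zeta functions, and the Main Theorem (Theorem \ref{czet}) then determines the graph. The paper's proof is literally this one sentence; your additional bookkeeping about Gelfand--Naimark, Proposition \ref{zetaone}, and the meaning of ``determines'' is a welcome elaboration but not a different argument.
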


\begin{proof}
The spectral triple determines the zeta functions which by the main theorem determines the graph.
\end{proof} 

\begin{Rem}
It would be desirable to give a functorial version of the construction. The graphs under consideration form a category in the obvious way, the objects being graphs and the arrows being graph-homomorphisms.
By Theorem \ref{czet} the map
\bea
(X,O,\alpha) \mapsto \left(\zeta^{X,O,\alpha}_{a}\right)_{a \in A_{\infty}}
\eea
descends to a bijection of isomorphism classes
\bea
[X] \mapsto \zeta[X],
\eea
where the isomorphism classes and the image on the right are defined as the images of the isomorphism classes of $X$. Furthermore, by Theorem \ref{ctrip} this induces an equivalence relation on spectral triples coming from the construction as well, so this defines a map:
\bea
\{ \text{graphs} \} / \{ \text{isomorphisms} \} & \to & \{ \text{spectral triples} \} / \{ \text{induced isomorphisms} \} \nn \\
\left[X\right] & \mapsto & \left[\mathcal{S}_{X}\right]
\eea
At this very moment however, there does not exist a (generally accepted) category of spectral triples (see \cite{cats3} for some ideas using strict equivalence, but not (yet) involving Morita equivalence). The spectral triples constructed here are commutative, which is restrictive, and therefore missing morphisms which are not clearly visible in the commutative setting. For instance, two commutative algebras are Morita equivalent if and only if these are isomorphic as algebras, a statement which is not true in the non-commutative setting (see for instance \cite{ra}).
\end{Rem}
\bibliography{database}
\bibliographystyle{amsplain}
\end{document}